\definecolor{couleurliens}{rgb}{.2,0.,.8} % for sections, subsections and equations
\newtheorem{definition}{Definition}[section]
\newtheorem{theorem}{Theorem}
\newtheorem{example}{Example}
\newtheorem{lemma}{Lemma}[section]
\newtheorem{corollary}{Corollary}[section]
\newtheorem{proposition}{Proposition}[section]
\newtheorem{remark}{Remark}[section]
\newcommand{\F}{{\mathcal F}}
\newcommand{\R}{{\mathbb{R}}}
\newcommand{\Bm}{\overline{M}}
\newcommand{\bg}{\overline{g}}
\newcommand{\bnab}{\overline{\nabla}}
\newcommand{\snab}{\stackrel{\star}{\nabla}}
\newcommand{\sn}{\stackrel{\star}{A}_{\xi}}
\newcommand{\sE}{\stackrel{\star}{E}}
\newcommand{\sk}{\stackrel{\star}{k}}
\newcommand{\sS}{\stackrel{\star}{S}}
\newcommand{\st}{\stackrel{\star}{T}}
\title{Null hypersurfaces and trapping horizons}
\author{Hans Fotsing Tetsing\footnote{Corresponding author E-mail:~\href{mailto: hans.fotsing@aims-cameroon.org}{hans.fotsing@aims-cameroon.org}}\\
{\footnotesize Faculty of Sciences, University of Douala\quad and African Institute for Mathematical Sciences}\\
{\footnotesize Po Box: 608 Limbe, Cameroon}\\
Ferdinand Ngakeu\footnote{E-mail:~\href{mailto: fngakeu@yahoo.fr}{fngakeu@yahoo.fr}}\\
{\footnotesize Faculty of Sciences, University of Douala, Po Box: 24157 Douala, Cameroon}}
\begin{document}
\maketitle

\begin{abstract}
The purpose of the present work is to study (marginally) trapped submanifolds lying in a null hypersurface. Let $(M,g,N)\to\Bm(c)$ be a null hypersurface of a space-time with constant sectional curvature $c$, endowed with a Screen Integrable and Conformal rigging $N$. The (Marginally) Trapped Submanifolds we are interested with are particular leaves of the screen distribution according to the sign of their expansions. We prove that if $c$ is non-positive, then $\Bm$ cannot contain a null non-expanding horizon. In the case $c$ is positive, we show that if $\Bm$ satisfies Einstein's equation and dominant energy condition holds, then any null trapping horizon of $\Bm$ is a null non-expanding horizon. More generally we prove that in a spacetime $\Bm(c)$ with constant sectional curvature $c$, cross-sections of a marginally outer trapped tube are Riemann manifold with the same constant sectional curvature $c$.
\end{abstract}

\noindent{\bfseries keywords: }{Monge hypersurface, Null hypersurface, screen distribution, Rigging vector field, Marginally Trapped Submanifold, Trapping Horizon, Black Hole}

\noindent{\bfseries MSC[2010]} 053C23, 53C25, 53C44, 53C50

\section{Introduction}

Let $(\Bm,\bg)$ be a proper semi-Riemannian manifold and $x:M\to\Bm$ be an embedded hypersurface of $\Bm$. The pull-back metric $g=x^\star\bg$ can be either degenerate or non-degenerate on $M$. When $g$ is non-degenerate, one says that $(M,g)$ is a semi-Riemannian hypersurface of $(\Bm,\bg)$ otherwise $(M,g)$ is called a null (or degenerate, or lightlike) hypersurface of $(\Bm,\bg)$. Since any semi-Riemannian hypersurface has a natural transversal vector field, namely the Gauss map which is everywhere orthogonal to the hypersurface, there is a standard way to study such a hypersurface. Geometrical objects of the ambient manifold $\Bm$ are projected orthogonally on $M$ and give new objects which are used to study the extrinsic geometry of the hypersurface.

On the contrary, for a null hypersurface $M$, the normal bundle $TM^\perp$ is not transversal but rather tangent to $M$. Therefore, other approaches are needed to study the extrinsic geometry of null hypersurfaces. Some authors, for instance, K. L. Duggal, A. Bejancu and M. Gutierrez, B. Olea \cite{DB, GO} have proposed some approaches. In \cite{DB}, it is proved that for every choice of a supplementary distribution $S(TM)$ (called a screen distribution) of $TM^\perp$ in $TM$, and for every choice of a null section $\xi$ of $TM^\perp$, there exists a unique rank one bundle $tr(TM)$ transverse to $M$ and a (locally defined) null section $N$ of $tr(TM)$ such that $\bg(\xi,N)=1$. To study the extrinsic geometry of $M$, geometrical objects of $\Bm$ are projected on $M$ parallelly to $tr(TM)$. Some difficulty with this method is a large number of arbitrary elections need and the fact that induced objects are locally defined. 

In \cite{GO}, the authors consider a vector field $\zeta$ defined on an open subset containing $M$ and everywhere transversal to $M$ (called a rigging vector field). This vector $\zeta$ fixes a unique transversal bundle and a unique screen distribution. Notice that, a rigging vector field may not exists for a given null hypersurface. However for a spacetime (time-orientable Lorentzian manifold) $\Bm$, there exists a timelike vector field globally defined on $\Bm$. This timelike vector field is rigging for any null hypersurface of $\Bm$, since a timelike vector field can't be tangent to a null hypersurface in a Lorentzian ambient.  

Introduced by Penrose in \cite{PR}, the concept of \emph{trapped surfaces} plays an important role in general relativity. A spacelike surface $S$ is said to be a trapped surface if all light rays emitted from the surface locally converge. Nothing can escape, not even the light. It is believed that there will be a marginally trapped surface separating the trapped surfaces from the untrapped ones where the outgoing light rays are instantaneously parallel. For example in stationary spacetimes, the event horizon of a black hole is a marginally trapped surface.

Galileo's principle according to which all bodies fall equally fast is the equivalent to the Newtonian principle saying that the initial mass (the $m$ in the fundamental Newton formula $F=ma$) and the passive gravitational mass (the mass acted on by a gravitational field) are equal for a given body \cite{DP}. Hence for these two theories, gravity is a field present in the universe and which affects all bodies. In general relativity, the gravitational field is the manifestation of the curvature of the spacetime which is the consequence of the presence of the matter and
%Since any Lorentzian manifold is locally conformally flat (isothermal coordinate system), (1) relativity fail for small scales and (2) freely falling observers fell no gravitational field at sufficiently small scales (this is the relativist equivalent of the above-mentioned Galileo and Newton principles). Hence, 
no notion of an intense gravitational field can be attached to one single spacetime point: a local notion becomes necessary. A normal bundle of a spacelike surface $S$ can be spanned by two future-directed null vector fields, say $k$ and $\ell$. (We set $\ell$ to be in the outgoing direction.) 
Since trajectories of light are null geodesics, $S$ can be taken as an initial event for sending two pulses of light: one toward one side of the surface (say inward) and the other toward the other side (say outward). When the gravitational field is weak, the pulse of light sent outward will increase its area, while the pulse of light sent inward will have decreasing area. If the gravitational field near the surface is intense and directed inward, it is possible that the outward light geodesics may bend inward sufficiently so that the area of the light fronts decreases. This geometric fact is taken as an indicator of the presence of a strong gravitational field. Spacelike surfaces where this behaviour occurs are called trapped surfaces and the ones with a behaviour borderline between the "normal" situation and the strong gravitational field situation are called marginally trapped. \cite[For more physical comment on (marginally) trapped surfaces.]{Ja, LMW, Ja2, GJ, Gou}.

Let $x:S\to(\Bm,\bg)$ be a spacelike codimension two submanifold of an $(n+2)-$dimensional space-time $\Bm$. Then there exist two future directed null vector field $k$ and $\ell$ spanning the normal bundle $TS^\perp$ and normalized as $\bg(k,\ell)=-1$. The expansions of $S$ with respect to $k$ and $\ell$ are respectively defined as the traces $\theta^{(k)}$ and $\theta^{(\ell)}$ of $\bnab k$ and $\bnab\ell$ with respect to the induced metric $x^*\bg$, being $\bnab$ the Levi-Civita connexion of $\bg$. the submanifold $S$ is called a trapped submanifold when the expansions $\theta^{(k)}$ and $\theta^{(\ell)}$ have different signs, and is called marginally outer trapped submanifold when at least one of these expansions vanishes. 

Let $x:M\to\Bm$ be a null hypersurface of a $(n+2)-$dimensional spacetime $(\Bm,\langle,\rangle)$. Let $N$ be a null vector field defined on $\Bm$ and everywhere transversal to $M$: we call $N$ a rigging vector field for $M$. Let $\xi$ be the unique null vector field on $M$ such that $\langle N,\xi\rangle=1$ ($\xi$ is called the associated rigged vector field) and $S(N)=ker(x^\star\langle N,\cdot\rangle)$ the associated screen distribution. The shape operator $A_N$ (resp. $\sn$) is defined on sections of $TM$ as the projection of the covariant derivative $-\bnab_\cdot N$ (resp. $-\bnab_\cdot\xi$) on the screen distribution along $N$ (resp. $\xi$). The null hypersurface $M$ (resp. the screen distribution) is said to be totally umbilical if there is a function $\rho$ (resp. $\lambda$) such that $\sn=\rho P$ (resp. $A_N=\lambda P$), $P$ being the projection from $TM$ onto $S(N)$ along $\xi$. The vector field $N$ is said to be conformal if there exists a function $\varphi$ such that $A_N=\varphi\sn$. A Screen Integrable and Conformal (SIC) rigging is a conformal rigging with integrable associated screen distribution. Let's assume that $N$ is a SIC rigging and call $L$ a generic leaf of the screen distribution. Then $L$ is a codimension two spacelike submanifold of $\Bm$ and its normal bundle $TL^\perp$ is spanned by $k=N$ and $\ell=-\xi$, assume to be future directed. Expansions of $L$ are given by $\theta^{(k)}=-tr(A_N)|_L$ and $\theta^{(\ell)}=tr(\sn)|_L$. Hence when $M$ is minimal (i.e trace of $\sn~$ vanishes), leaves of $S(N)$ are marginally outer trapped submanifolds and $M$ is called a marginally outer trapped tube and leaves of $S(N)$ are called cross-sections. 

This paper is organized as follows. This section is labelled Introduction. In the next Section \ref{section2}, we recall general setup and notations on null hypersurfaces. Particularly, we mention some useful results on rigged lightlike hypersurfaces and the behaviour of some geometrical objects under the change of rigging. Section \ref{section3} is devoted to the main results of this paper. We characterize (marginally) trapped submanifold lying in a null hypersurface $M$ by geometrical objects of $M$. For instance, we prove that cross-sections of a null marginally outer trapped tube in a spacetime with constant sectional curvature $c$, have the same constant sectional curvature $c$. We also prove that if $M$ is totally umbilical with $\sn=\rho P$, then leaves of the screen distribution are space forms with constant sectional curvature $\kappa=c+2\varphi\rho^2$. We investigate the case of Monge null hypersurfaces in Lorentz-Minkowski space $\R^{n+2}_1$ and find that they cannot be foliated by trapped submanifold. 
%We also obtain the following results.\vspace{.2cm}
%
%\noindent {\bfseries Theorem A}:~~A null hypersurface $(M,g)$ of a semi-Riemannian space form $(\Bm(c),\bg)$ is totally geodesic if and only if it is minimal.\vspace{.2cm}
%
%\noindent {\bfseries Theorem B}:~~Let $M$ be a null hypersurface of a semi-Riemannian manifold satisfying the null convergence condition. Then, $M$ is totally geodesic if and only if $M$ is minimal.

\section{General setup on null hypersurfaces}\label{section2}

Throughout this work, $(\Bm, \bg)$ is a $(n+2)-$dimensional $(n\geq1)$ semi-Riemannian manifold of index $q\geq1$. (From Section \ref{section3}, the couple $(\Bm,\bg)$ is a Lorentzian manifold time-orientable (and then space-orientable) of signature $(-,+,\dotsb,+)$, that is a spacetime.) $\bnab$ and $\bar{R}$ will denote respectively the Levi-Civita connection and the Riemannian curvature of $\bg$. The metric $\bg$ may be denoted by $\langle,\rangle$. All manifolds are assumed smooth and connected. Let $\Sigma$ be a $d-$dimensional manifold with $d\leq n+2$. If there exists an immersion $x:\Sigma\to\Bm$ then, $x(\Sigma)$ is called a $d-$dimensional {\bfseries immersed submanifold} of $\Bm$.  If moreover $x$ is injective one says that $x(\Sigma)$ is a $d-$dimensional {\bfseries submanifold} of $\Bm$. If in addition the inverse map $x^{-1}$ is a continue map from $x(\Sigma)$ to $\Sigma$, $x(\Sigma)$ is a $d-$dimensional {\bfseries embedded submanifold} of $\Bm$. When $x(\Sigma)$ is an embedded submanifold, one identifies $\Sigma$ and $x(\Sigma)$. All submanifolds will be taken embedded and through the identification state before, saying that $x:M\to\Bm$ is a submanifold will mean that there is an embedding $x:\Sigma\to\Bm$ such that $M=x(\Sigma)$. An hypersurface of $\Bm$ is a submanifold of $\Bm$ of dimension $d=n+1$. We will said that $x:(M,g)\to(\Bm,\bg)$ is an isometrically immersed submanifold when, $x:M\to\Bm$ is a submanifold of $\Bm$ and $g=x^\star\bg$. An isometrically immersed submanifold $x:(M,g)\to(\Bm,\bg)$ is called a {\bfseries spacelike submanifold} (resp. a {\bfseries Lorentzian submanifold}) if $(M,g)$ is a spacelike (resp. a Lorentzian) manifold, and a {\bfseries null submanifold} (or a {\bfseries lightlike submanifold}) if the metric $g$ is degenerate. The latter means that at each point $p\in M$ there exists a nonzero vector $u\in T_pM$ such that $g_p(u,v)=0$ for any $v\in T_pM$.

Let $x:(M,g)\to(\Bm,\bg)$ be an isometrically immersed lightlike hypersurface. Then, the normal bundle $TM^\bot$ is a sub-distribution of $TM$ and there is noway to define orthogonal projection from $T_p\Bm$ into $T_pM$. To study the geometry of the null hypersurface $M$, we need to choose smoothly a transverse direction to $T_pM$ and define up to this transverse a projection from $T_p\Bm$ into $T_pM$. There are infinitely many possibilities to choose a complementary of $TM^\bot$ in $TM$ and such a complementary is called {\bfseries screen distribution}. Let $\mathcal S(TM)$ be a fixed screen distribution. One has,
\begin{equation}\label{decomp0}
TM=\mathcal S(TM)\oplus_{orth}TM^\bot.
\end{equation}
Firstly we have:
\begin{theorem}[\cite{DB}]\label{db} 
	Let $(M,g,S(TM))$ be a null hypersurface endowed with a chosen screen distribution $\mathcal S(TM)$. Then, there exists a unique rank $1$ vector bundle $tr(TM)$ over $M$, such that for any nonzero section $\xi$ of $TM^\bot$ on a coordinate neighborhood $\mathcal U\subset M$, there exists a unique section $N$ of $tr(TM)$ on $\mathcal U$ satisfying
	\begin{equation}\label{normalization}
	\bg(\xi, N)=1, \;\; \bg(N,N)=\bg(N,W)=0, \;\;\forall
	W\in\Gamma(\mathcal S(TM)|_{\mathcal{U}}).
	\end{equation}
\end{theorem}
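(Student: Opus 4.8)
The plan is to reduce the whole statement to linear algebra in a two-dimensional Lorentzian plane and then glue. First I would record that the chosen screen distribution $\mathcal S(TM)$ is non-degenerate: if $v\in\mathcal S(TM)$ were $g$-orthogonal to all of $\mathcal S(TM)$, then by the orthogonal splitting \eqref{decomp0} it would also be orthogonal to $TM^\perp$, hence lie in $TM^\perp\cap\mathcal S(TM)=\{0\}$. Consequently its $\bg$-orthogonal complement $\mathcal S(TM)^\perp$ inside $T\Bm|_M$ is a non-degenerate rank-two vector bundle, $T\Bm|_M=\mathcal S(TM)\oplus_{orth}\mathcal S(TM)^\perp$, and $TM^\perp\subset\mathcal S(TM)^\perp$. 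The point of this reduction is that the condition $\bg(N,W)=0$ for all $W\in\Gamma(\mathcal S(TM))$ in \eqref{normalization} says exactly that $N$ must be a section of $\mathcal S(TM)^\perp$, so the task becomes: in the rank-two bundle $\mathcal S(TM)^\perp$, which carries a non-degenerate metric and contains the null line $TM^\perp$, produce the null direction conjugate to $\xi$.

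For existence I would work on a coordinate neighbourhood $\mathcal U$ with a nowhere-zero $\xi\in\Gamma(TM^\perp|_{\mathcal U})$ and complete $\xi$ to a local frame $\{\xi,V\}$ of $\mathcal S(TM)^\perp|_{\mathcal U}$ (possible after shrinking $\mathcal U$, a rank-two bundle being locally trivial). Since the plane bundle is non-degenerate and $\xi$ is null, $\xi^\perp\cap\mathcal S(TM)^\perp=\R\xi$, so $\bg(\xi,V)$ vanishes nowhere, and I would set
\[
N=\frac{1}{\bg(\xi,V)}\Big(V-\frac{\bg(V,V)}{2\,\bg(\xi,V)}\,\xi\Big),
\]
which is a smooth section of $\mathcal S(TM)^\perp|_{\mathcal U}$; a one-line computation gives $\bg(N,N)=0$ and $\bg(\xi,N)=1$, i.e. \eqref{normalization}. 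For uniqueness with $\xi$ fixed: $N$ and $\xi$ are independent (as $\bg(\xi,N)=1\neq0$), hence they frame $\mathcal S(TM)^\perp|_{\mathcal U}$; writing a competitor as $N'=aN+b\xi$, the equation $\bg(\xi,N')=1$ forces $a=1$ and then $\bg(N',N')=2b$ forces $b=0$, so $N'=N$.

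It remains to globalize and to prove uniqueness of the bundle. On an overlap $\mathcal U\cap\mathcal U'$ a different choice of radical section has the form $\xi'=f\xi$ with $f$ nowhere zero, and then the solution attached to $\xi'$ is $f^{-1}N$, which spans the same line; hence the locally defined lines $\R N$ patch to a globally defined rank-one subbundle $tr(TM)\subset T\Bm|_M$ satisfying the statement. If $tr'(TM)$ is any rank-one bundle with the asserted property, then over each $\mathcal U$ it carries a section $N'$ fulfilling \eqref{normalization} for the same $\xi$, so $N'=N$ by the pointwise uniqueness just proved, whence $tr'(TM)=tr(TM)$. (Along the way one also sees $N\notin TM$, for decomposing $N$ via \eqref{decomp0} would otherwise give $\bg(\xi,N)=0$; thus in fact $T\Bm|_M=TM\oplus tr(TM)$.)

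I do not expect a genuine obstacle: the mathematical content is linear algebra in a two-dimensional non-degenerate (necessarily Lorentzian) plane together with a routine gluing. The only steps needing a little care are the non-degeneracy of $\mathcal S(TM)$ and of $\mathcal S(TM)^\perp$, which legitimizes the rank-two reduction, and the smoothness of the constructed $N$, which holds because $\xi$, $V$ and $\bg(\xi,V)^{-1}$ are smooth on $\mathcal U$.
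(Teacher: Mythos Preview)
The paper does not give its own proof of this theorem; it merely cites the result from Duggal--Bejancu \cite{DB}. Your argument is correct and is in fact essentially the standard proof found in that reference: one shows $\mathcal S(TM)$ is non-degenerate, passes to the rank-two non-degenerate bundle $\mathcal S(TM)^\perp\subset T\Bm|_M$ containing the null line $TM^\perp$, and then solves a two-dimensional Lorentzian linear-algebra problem to produce the unique null $N$ with $\bg(\xi,N)=1$, after which the line $\R N$ is independent of the local choice of $\xi$ and glues to the global bundle $tr(TM)$.
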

One then has the decomposition,
\begin{equation}\label{decomp}
T\Bm_{|M}=TM\oplus tr(TM)=\mathcal
S(TM)\oplus_{orth}\left(TM^\bot\oplus tr(TM)\right).
\end{equation}

Secondly, it is noteworthy that the choice of a null
transversal vector field $N$ along $M$ determines the null transversal vector bundle, the screen distribution $S(TM)$ and a unique radical vector field, $\xi$ say, satisfying (\ref{normalization}) and (\ref{decomp}).  Now, to continue our
discussion, we need to clarify the concept of rigging for our null
hypersurface. In \cite{GO}, the authors introduce the notion of
rigging for null hypersurfaces of Lorentzian manifolds. In the present work, we show
that this notion can be generally defined when the ambient
space is a semi-Riemannian manifold as well as it is the case here.

\begin{definition}\label{rigging1}
	A {\bfseries rigging}  for $M$ is a vector field $\zeta$ defined over $M$ such that
	$\displaystyle \zeta_{p}\notin T_{p}M$ for each $p\in M$.
\end{definition}

A {\bfseries null rigging} for $M$ is a rigging $\zeta$ such that for all $p\in M$, $\bg_p(\zeta_p,\zeta_p)=0$. Let $\zeta$ be a rigging for $M$, $\eta=\bg(\zeta,\cdot)$
and $\widetilde g=g+\eta\otimes\eta$.

\begin{lemma}\label{tildeg}
	$\widetilde g$ is a non-degenerate metric on $M$.
\end{lemma}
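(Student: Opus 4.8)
The plan is to show $\widetilde g = g + \eta\otimes\eta$ is non-degenerate by examining it separately on the screen distribution $\mathcal S(TM)$ and on the radical $TM^\bot$, using the decomposition \eqref{decomp0} together with the normalization properties of the rigging. First I would fix a point $p\in M$, let $\xi$ span $T_pM^\bot$ and note that $L$ being a rigging means $L_p\notin T_pM$; since $g$ is degenerate exactly on $TM^\bot$, I would argue that $\theta_p(\xi)=\bg_p(L_p,\xi)\neq 0$, because if it vanished then $L_p$ would be $\bg$-orthogonal to all of $T_pM = \mathcal S(TM)\oplus_{orth}TM^\bot$ (the screen part being $g$-, hence $\bg$-orthogonal to $\xi$ is not automatic, so one instead checks that $L_p$ lies in $T_pM^{\perp\!\!\perp}$, which in a non-degenerate ambient metric forces $L_p\in T_pM$ when $\dim$ counts work out). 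Concretely, $\eta_p(\xi) = \theta_p(\xi) \neq 0$ is the key scalar; rescaling, one may assume $\eta_p(\xi)=1$, which matches the normalization $\bg(\xi,N)=1$ when $L$ is chosen compatibly.

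Next I would take an arbitrary $v\in T_pM$ with $\widetilde g_p(v,w)=0$ for all $w\in T_pM$ and show $v=0$. Decompose $v = v_S + a\xi$ with $v_S\in\mathcal S(TM)$. Testing against $w\in\mathcal S(TM)$: since $g$ is non-degenerate on the screen and $\eta\otimes\eta$ restricted there behaves predictably, I would extract that the screen component is controlled. Testing against $w=\xi$: here $g_p(v,\xi)=0$ automatically (as $\xi\in TM^\bot$), so the condition becomes $\eta_p(v)\eta_p(\xi)=0$, whence $\eta_p(v)=0$. But $\eta_p(v) = a\,\eta_p(\xi) + \eta_p(v_S)$, which pins down $a$ once the screen part is known. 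Running the screen test again with this information gives $v_S=0$ and then $a=0$. The bookkeeping is routine once the single nonvanishing fact $\eta_p(\xi)\neq 0$ is in hand.

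The genuine obstacle — really the only substantive point — is establishing $\eta_p(\xi) = \bg_p(L_p,\xi) \neq 0$. The clean way is: $T_pM$ has dimension $n+1$ in the $(n+2)$-dimensional $\Bm$, and since $T_pM^\bot\subset T_pM$ is one-dimensional and $\bg$ is non-degenerate, $T_pM^\bot$ is exactly the $\bg$-radical of $T_pM$ and $T_p M = (T_pM^\bot)^\perp$. If $\bg_p(L_p,\xi)=0$ then $L_p\in (T_pM^\bot)^\perp = T_pM$, contradicting that $L$ is a rigging. Hence $\bg_p(L_p,\xi)\neq 0$, i.e.\ $\eta_p(\xi)\neq 0$ everywhere on $M$, and smoothness of $L$ gives smoothness of $\widetilde g$. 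I would present this dimension/orthogonality argument first as a short preliminary observation, then do the two-line non-degeneracy check on $\widetilde g$ using the block structure relative to $\mathcal S(TM)\oplus_{orth}TM^\bot$.
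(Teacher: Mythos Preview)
Your proposal is correct. The key step --- establishing $\eta_p(\xi) = \bg_p(L_p, \xi) \neq 0$ via the observation that otherwise $L_p$ would lie in $(T_pM^\bot)^\perp = T_pM$ --- is exactly the crux the paper also isolates (phrased dually there: if $\bg_p(\xi,L)=0$ then $\xi$ is $\bg$-orthogonal to all of $T_p\Bm = \mathrm{span}\{L_p\}\oplus T_pM$, forcing $\xi=0$). From there you test $\widetilde g(v,\xi)=0$ to obtain $\eta(v)=0$, which is again what the paper does. The one difference is in the concluding step: you decompose $v$ along $\mathcal S(TM)\oplus_{orth}TM^\bot$ and use non-degeneracy of $g$ on the screen to kill the screen component, then $\eta(v)=0$ together with $\eta(\xi)\neq0$ to kill the $\xi$-component. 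The paper instead stays at the ambient level: once $\eta(v)=\bg(L,v)=0$, the relation $\widetilde g(v,w) = g(v,w)=\bg(v,w)$ for all $w\in T_pM$ combines with $T_p\Bm = \mathrm{span}\{L_p\}\oplus T_pM$ to give $\bg(v,\cdot)=0$ on all of $T_p\Bm$, hence $v=0$ by non-degeneracy of $\bg$. Your route works but invokes a screen distribution and its non-degeneracy, which is extra structure not actually needed here; the paper's argument is shorter and entirely self-contained at the level of the ambient metric.
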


\begin{proof}
	Let $u\in T_pM$ such that $\widetilde g_p(u,v)=0$ for every $v\in
	T_pM$. In particular, for $\xi\in TM^\perp$, one has $0=\widetilde
	g_p(u,\xi_p)=\bg_p(\xi_p, \zeta_p)\bg_p(u,\zeta_p)$. Since
	$\xi\in\Gamma(TM^\perp)$ and $\bg$ is non-degenerate,
	$\bg_p(\xi_p,\zeta_p)\neq0$ and then $\bg_p(u,\zeta_p)=0$. gathering this
	with the fact that $T_p\Bm_{|M}=span\{\zeta_p\}\oplus T_pM$, one has
	$\bg_p(u,v)=0$ for every $v\in T_p\Bm$, which implies that $u=0$,
	since $\bg_p$ is a non-degenerate metric.
\end{proof}

The triple $(M,g,\zeta)$ is called a {\bfseries rigged null hypersurface} and $\widetilde g$ the {\bfseries associated metric}, it is a semi-Riemannian metric of index $q-1$. One defines the screen distribution associated to the chosen rigging $L$ by $S(TM)=ker(\eta)$.

\begin{definition}
	The {\bfseries rigged} vector field associated to $\zeta$ is the vector field $\widetilde
	g-$metrically equivalent to the $1-$form $\eta$. It is denoted by $\xi$.
\end{definition}

\begin{lemma}
	Let $x:(M, g, \zeta)\to(\Bm,\bg)$ be a rigged null hypersurface, $\xi$ the associated rigged vector field and $S(TM)$ the associated screen distribution. Then, $N=\zeta-\frac12\bg(\zeta,\zeta)\xi$ is a (restricted) null rigging and equations in (\ref{normalization}) and decompositions in (\ref{decomp}) hold with transverse bundle $tr(TM)=span\{N\}$. 
\end{lemma}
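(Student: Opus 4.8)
The plan is to first identify the rigged vector field $\xi$ concretely, and then obtain every assertion by a direct substitution. Although $\xi$ is defined abstractly as the $\widetilde g$-dual of $\eta$, I claim it must in fact be a section of $TM^\perp$: pick any local section $\zeta$ of $TM^\perp$; the proof of Lemma \ref{tildeg} shows that $a:=\bg(\zeta,L)=\eta(\zeta)$ is nowhere zero, and since $g(\zeta,u)=0$ for every $u\in\Gamma(TM)$ we get $\widetilde g(a^{-1}\zeta,u)=g(a^{-1}\zeta,u)+\eta(a^{-1}\zeta)\eta(u)=\eta(u)$. So, $\widetilde g$ being non-degenerate (Lemma \ref{tildeg}), uniqueness of the metric dual forces $\xi=a^{-1}\zeta$. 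Hence $\xi$ spans $TM^\perp$, it is null, $\bg(\xi,W)=0$ for every $W\in\Gamma(TM)$, and $\bg(\xi,L)=\eta(\xi)=a^{-1}\eta(\zeta)=1$; these are the standard properties of the rigged vector field of \cite{GO}.

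With $\bg(\xi,\xi)=0$, $\bg(\xi,L)=1$ and $\bg(\xi,W)=0$ ($W$ tangent) in hand, the normalization conditions for $N=L-\frac12\bg(L,L)\xi$ are immediate. First, $N$ is a rigging: if $N_p\in T_pM$ then, since $\xi_p\in T_pM$, also $L_p=N_p+\frac12\bg(L,L)\xi_p\in T_pM$, contradicting that $L$ is a rigging. Expanding $\bg(N,N)$ and using $\bg(L,\xi)=1$, $\bg(\xi,\xi)=0$ gives $\bg(N,N)=\bg(L,L)\bigl(1-\bg(L,\xi)\bigr)=0$, so $N$ is a null rigging; similarly $\bg(\xi,N)=\bg(\xi,L)-\frac12\bg(L,L)\bg(\xi,\xi)=1$; and for $W\in\Gamma(S(TM))=\Gamma(\ker\eta)$ one has $\bg(N,W)=\bg(L,W)-\frac12\bg(L,L)\bg(\xi,W)=\eta(W)=0$. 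This is exactly (\ref{normalization}) for the pair $(\xi,N)$.

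For the decomposition: since $\xi\in\Gamma(TM)$, $L$ and $N$ agree modulo $TM$, so $span\{L\}\oplus TM=span\{N\}\oplus TM$; as $T\Bm_{|M}=span\{L\}\oplus TM$ this gives $T\Bm_{|M}=TM\oplus tr(TM)$ with $tr(TM):=span\{N\}$. Combining with (\ref{decomp0}) and $TM^\perp=span\{\xi\}$, it remains to note that $S(TM)$ is $\bg$-orthogonal to $span\{\xi,N\}$ (to $\xi$ because $\xi\in TM^\perp$, to $N$ by the previous paragraph) and that $span\{\xi,N\}$ is non-degenerate, with Gram matrix $\left(\begin{smallmatrix}0&1\\1&0\end{smallmatrix}\right)$ in the basis $(\xi,N)$; this yields (\ref{decomp}), and $tr(TM)$ coincides with the transversal bundle of Theorem \ref{db} attached to the screen $\ker\eta$. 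The main (and only nontrivial) step is the identification $\xi=a^{-1}\zeta$ in the first paragraph; everything after it is bookkeeping.
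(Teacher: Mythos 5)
Your proof is correct, and it is in fact more self-contained than the paper's own argument. The paper asserts without proof that $N$ is a null rigging and that $\eta=\bg(N,\cdot)$ on $TM$ (``it is easy to see''), and its key chain $\widetilde g(\xi,\xi)=g(\xi,\xi)+\eta(\xi)^2=\bg(N,\xi)^2$ silently uses $g(\xi,\xi)=0$; it then extracts $\bg(N,\xi)=1$ by comparing that computation with the defining identity $\widetilde g(\xi,\xi)=\eta(\xi)$. You instead begin by proving exactly the fact all of those shortcuts rest on: that the $\widetilde g$-dual $\xi$ of $\eta$ is the section of $TM^\perp$ normalized by $\eta(\xi)=1$, obtained by rescaling a local section $\zeta$ of $TM^\perp$ by $a^{-1}=\eta(\zeta)^{-1}$ and invoking the non-degeneracy of $\widetilde g$ from Lemma \ref{tildeg}. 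Once $\xi\in\Gamma(TM^\perp)$, $\bg(\xi,\xi)=0$ and $\bg(\xi,L)=1$ are in hand, the conditions (\ref{normalization}) and the splittings (\ref{decomp}) do follow by the substitutions and the Gram-matrix observation you give, and the uniqueness clause of Theorem \ref{db} identifies $span\{N\}$ with the transversal bundle of the screen $\ker\eta$. So both proofs hinge on the same two ingredients (the duality definition of $\xi$ and Lemma \ref{tildeg}), but you reach $\bg(N,\xi)=1$ by explicitly locating $\xi$ inside $TM^\perp$, whereas the paper reaches it by computing $\widetilde g(\xi,\xi)$ in two ways; your route is slightly longer but justifies the steps the paper leaves implicit, and I see no gap in it.
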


\begin{proof}
	It is easy to see that $N$ is a null rigging and $\eta=\bg(N,\cdot)$. By definition, $\widetilde g(\xi,\xi)=g(\xi,\xi)+\eta(\xi)^2=\bg(N,\xi)^2\neq0$. In another hand since $\xi$ is the $\widetilde g-$metrically equivalent vector to the $1-$form $\eta$, one has $\widetilde g(\xi,\xi)=\eta(\xi)=\bg(N,\xi)$. It follows that $\bg(N,\xi)=1$. Since $S(TM)=ker(\eta)$, the other equality in (\ref{normalization}) holds and decompositions in (\ref{decomp}) are straightforward.
\end{proof}

As from any rigging, one has a (restricted) null rigging, in what follows, we will consider a rigged null hypersurface $x:(M,g,N)\to(\Bm,\bg)$ endowed with a null rigging $N$. An advantage of rigging is that the null transverse $N$ is globally defined on $M$. But the existence of rigging is not always granted for any null hypersurface.  However if $(\Bm,\bg)$ is a (time-orientable) spacetime then, there exists a timelike vector field $\zeta$ globally defined on $\Bm$. This timelike vector field is rigging for any null hypersurface of $\Bm$ (since a timelike vector field can't belong to the tangent space of a null hypersurface of a Lorentzian manifold) and one can use $\zeta$ to derive a null rigging as above. We say that $(M, g, N)$ is a closed rigged null hypersurface when the $1-$form $\eta$ is closed.                                                                                                                    

%\section{Normalization on a null hypersurface}

\begin{lemma}
	Any closed rigged null hypersurface $(M,g,N)$ is foliated.
\end{lemma}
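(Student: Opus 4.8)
The plan is to read the conclusion ``$M$ is foliated'' as the assertion that the screen distribution $S(TM)=\ker(\eta)$ is integrable, so that its maximal integral submanifolds constitute a codimension-one foliation of $M$, and to obtain this from the Frobenius theorem together with the normalizations already established.

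First I would check that $\eta$ is a nowhere-vanishing $1$-form on $M$. This is immediate from the preceding lemmas: evaluating on the rigged vector field $\xi$ gives $\eta(\xi)=\widetilde g(\xi,\xi)=\bg(N,\xi)=1$ at every point of $M$, so $\eta$ never vanishes and $S(TM)=\ker(\eta)$ is a genuine smooth distribution of rank $n$, i.e. of codimension one in $M$.

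Next I would invoke the classical integrability criterion for a Pfaffian equation of rank one: the distribution $\ker(\eta)$ is completely integrable if and only if $d\eta\wedge\eta=0$. By hypothesis the rigged hypersurface is closed, meaning $d\eta=0$, so this condition holds trivially. Frobenius' theorem then yields that $S(TM)$ is integrable; its maximal integral manifolds partition $M$ and form the desired foliation, whose leaves are exactly the ``leaves of the screen distribution'' considered later in the paper. I would also record the sharper local statement furnished by the Poincar\'e lemma: in a neighbourhood of any point one has $\eta=du$ for some smooth function $u$, so that locally the leaves are the level hypersurfaces $\{u=\mathrm{const}\}$.

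The only point requiring care here is conceptual rather than computational: one must make sure that ``foliated'' is meant precisely as integrability of $S(TM)$, and that $\eta$ is nowhere zero so that $\ker(\eta)$ has locally constant rank; both are immediate from the construction of the (null) rigging. No nontrivial calculation is involved — the content is entirely Frobenius applied to the closed, nowhere-vanishing $1$-form $\eta=\bg(N,\cdot)$.
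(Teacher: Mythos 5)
Your argument is correct and follows essentially the same route as the paper: the screen distribution is the kernel of the closed $1$-form $\eta$, so Frobenius (via $d\eta\wedge\eta=0$) gives integrability and the leaves foliate $M$. Your extra checks (that $\eta(\xi)=1$ makes $\eta$ nowhere vanishing, and the local description of leaves via the Poincar\'e lemma) are sound refinements of the same proof.
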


\begin{proof}
	Since the associated screen distribution $S(TM)$ is the kernel of the closed $1-$form $\eta$, then by the Frobenius theorem, the screen distribution $S(TM)$ is integrable and its leaves foliate $M$.
\end{proof}

Let $\nabla$ be the connection on $M$ induced from $\bnab$ through the projection along the transverse bundle $tr(TM)$. For every section $U$ of $TM$, one has $\bg(\bnab_U\xi,\xi)=0$. The Weingarten map is the endomorphism field
$$\begin{matrix}
\chi:&\Gamma(TM)&\to&\Gamma(TM)\\&U&\mapsto&\bnab_U\xi
\end{matrix}.$$

The Gauss-Weingarten equations of the immersion $x:M\to\Bm$ are given by
\begin{eqnarray}
\bnab_UV&=&\nabla_UV+B(U,V)N,\label{geq1}\\
\nabla_UPV&=&\stackrel\star\nabla_UPV+C(U,PV)\xi,\label{geq2}\\
\bnab_UN&=&-A_NU+\tau(U)N, \label{geq3}\\
\nabla_U\xi&=&-\sn U-\tau(U)\xi, \label{geq4}
\end{eqnarray}
for all $U,V\in\Gamma(TM)$, where $\stackrel\star\nabla$, denotes the connection on the screen distribution $S(TM)$ induced from $\nabla$ through the projection morphism $P$ of $\Gamma(TM)$ onto $\Gamma(S(TM))$ with respect to the decomposition (\ref{decomp0}). $B$ and $C$ are the local second fundamental forms of $M$ and $\mathcal S(TM)$ respectively, $A_N$ and $\stackrel\star A_\xi$ are the shape operators on $TM$ and $S(TM)$ respectively, and the {\bfseries rotation $1-$form} $\tau$ is given by
\begin{eqnarray*}
	\tau(U)=\bg(\bnab_UN,\xi).
\end{eqnarray*}

\begin{remark}
	Note that in general relativity when $M$ is an isolated horizon of a dimension $4$ black hole $\Bm$, the two first $\Psi_0$ and $\Psi_1$ of Weyl components vanish on $M$ and the only probably non-vanishing component $\Psi_2$ is gauge invariant and related to  $\tau$ by $-d\tau=(Im\Psi_2)\varepsilon$, being $\varepsilon$ the natural area $2-$form on $M$ and $Im\Psi_2$ the imaginary part of $\Psi_2$. When $Im\Psi_2$ vanishes, all angular momentum multipoles vanish and this horizon is said to be non-rotating, see \cite{ashtekar2004isolated,ashtekar2004multipole} for more details. This explains why $-\tau$ is called rotation $1-$form of an isolated horizon. However, $-\tau$ is called by H\'a\'jicek \cite{Ha,Ha3} \emph{a gravimagnetic field} and by Damour \cite{Da,Da2} \emph{a surface momentum density}. 
\end{remark}

It is easy to check that the Weingarten map and the second fundamental form of $M$ are related by
\begin{equation}
B(U,V) =-g(\chi(U),V),\quad  \forall U,V \in~\Gamma(TM),
\end{equation}
which show that the Weingarten map is $g-$symmetric. Also, second fundamental forms and sharp operators are related by
\begin{equation}\label{autoadjoint}
B(U,V) =g(\sn\!U,V),\quad  C(U,PV) =g(A_{N}U, V)\quad  \forall U,V \in~\Gamma(TM).
\end{equation}
One checks that
\begin{equation}\label{deg}
B(U,\xi) = 0, \quad \sn\xi = 0~~\forall U \in~\Gamma(TM).
\end{equation}

\begin{remark}
	Null hypersurface $M$ is ruled by integral curves of $\xi$. It follows from \eqref{geq4} and (\ref{deg}) that integral curves of $\xi$ are pregeodesics in both $\Bm$ and $M$, as $\displaystyle \bnab_{\xi}\xi = -\tau^{N}(\xi)\xi$. Hence $\xi$ remains collinear to itself when it is parallely transported along its integral curves, thus one can find a function $\alpha$ such that the rescaling $\alpha\xi$ is geodesic. When $\tau(\xi)\neq0$, for any integral curve $\gamma:s\mapsto\gamma(s)$, $s$ is not an affine parameter of the geometric curve plot of $\gamma$. For this reason, $\tau(\xi)$ is called the non-affinity coefficient. Also, integral curves of $\xi$ are called {\bfseries null generators} of $M$. When $M$ is the horizon of a Kerr black hole, $-\tau(\xi)$ is called the {\bfseries surface gravity} of the black hole, see \cite{GJ} for more details. We will refer to $\tau(\xi)$ as the surface gravity.
\end{remark}

Let $\widetilde{N}$ be another rigging for $M$. There exists a section $\zeta$ of $TM$ and a nowhere vanishing smooth function $\phi$ such that $\widetilde{N}=\phi N+U_0$. The following Lemma gives relationships between geometrical objects described above. 

\begin{lemma}[\cite{C3}]\label{change}
	Let $N$ and $\widetilde{N}$ be two riggings for $M$ with $\widetilde{N}=\phi N+U_0$, where $U_0\in\Gamma(TM)$ and $\phi\in\mathcal C(M)$. Then,
	\begin{dingautolist}{192}
		\item $\widetilde\xi=\frac1\phi\xi$ ;
		\item $2\phi\eta(U_0)+\langle U_0,U_0\rangle=0$ ;
		\item $B^{\widetilde N}=\frac1\phi B^N$ ;
		\item $\widetilde P=P-\frac1\phi g(U_0,\cdot)\xi$ ;
		\item $\widetilde\nabla=\nabla-\frac1\phi B(\cdot,\cdot)U_0$ ;
		\item $\tau^{\widetilde N}=\tau^N+d\ln|\phi|+\frac1\phi B(U_0,\cdot)$ ;
		\item $\stackrel{\star}{A}_{\widetilde\xi}=\frac1\phi\sn-\frac{1}{\phi^2}B^N(U_0,\cdot)\xi$ ;
		\item $A_{\widetilde N}=\phi A_N-\nabla_\cdot U_0+[\tau^N+d\ln|\phi|+\frac1\phi B(U_0,\cdot)]U_0$.
	\end{dingautolist}
\end{lemma}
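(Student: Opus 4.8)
The plan is to obtain all eight identities from a single computational scheme: invert the defining relation to $N=\tfrac1\phi(\widetilde N-\zeta)$, substitute into the Gauss--Weingarten equations (\ref{geq1})--(\ref{geq4}) (which are attached to the immersion $x$ and hence are independent of the rigging), and read off the tangential and transversal components relative to the new decomposition $T\Bm_{|M}=TM\oplus\mathrm{span}\{\widetilde N\}$. I would begin with the two purely algebraic identities. Since $TM^\perp$ has rank one and is spanned by $\xi$, the new rigged vector field must be $\widetilde\xi=a\,\xi$ for a nowhere-vanishing function $a$; pairing with $\widetilde N=\phi N+\zeta$ and using $\bg(N,\xi)=1$ together with $\bg(\zeta,\xi)=g(\zeta,\xi)=0$ (as $\zeta\in\Gamma(TM)$ and $\xi\in\Gamma(TM^\perp)$) forces $a\phi=1$, which is the first identity. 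The second identity is nothing but the nullity of $\widetilde N$: expand $0=\bg(\widetilde N,\widetilde N)=\phi^2\bg(N,N)+2\phi\bg(N,\zeta)+\bg(\zeta,\zeta)$ and use $\bg(N,N)=0$ and $\bg(N,\zeta)=\eta(\zeta)$.

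Next I would treat the three identities that involve only the Gauss equation (\ref{geq1}). Pairing (\ref{geq1}) with $\xi$ and using $\nabla_UV\in\Gamma(TM)\perp\xi$ gives $B^N(U,V)=\bg(\bnab_UV,\xi)$, so the third identity $B^{\widetilde N}=\tfrac1\phi B^N$ is immediate from $\widetilde\xi=\tfrac1\phi\xi$. Substituting $N=\tfrac1\phi\widetilde N-\tfrac1\phi\zeta$ into (\ref{geq1}) rewrites $\bnab_UV$ as $\bigl(\nabla_UV-\tfrac1\phi B^N(U,V)\zeta\bigr)+\tfrac1\phi B^N(U,V)\widetilde N$; since $\zeta\in\Gamma(TM)$ the first bracket is tangent to $M$, hence equals $\widetilde\nabla_UV$ (the fifth identity), while the coefficient of $\widetilde N$ re-confirms the third. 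For the projection I would use that $\widetilde P$ projects onto $\ker\widetilde\eta$ along $\widetilde\xi$ with $\widetilde\eta(\widetilde\xi)=1$, so $\widetilde PU=U-\widetilde\eta(U)\widetilde\xi$ for $U\in\Gamma(TM)$; inserting $\widetilde\eta(U)=\phi\eta(U)+g(\zeta,U)$ and $\widetilde\xi=\tfrac1\phi\xi$ collapses this to $PU-\tfrac1\phi g(\zeta,U)\xi$, the fourth identity.

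The last three identities use the Weingarten equations (\ref{geq3})--(\ref{geq4}) and carry essentially all the bookkeeping. For $\tau^{\widetilde N}$ I would compute $\bnab_U\widetilde N=U(\phi)N+\phi\bnab_UN+\bnab_U\zeta$, pair with $\widetilde\xi=\tfrac1\phi\xi$, use $\tau^N(U)=\bg(\bnab_UN,\xi)$ together with $\bg(\bnab_U\zeta,\xi)=B^N(U,\zeta)$ (again from (\ref{geq1})), and recognize $U(\phi)/\phi=d\ln|\phi|(U)$. The same expansion of $\bnab_U\widetilde N$, now with $N$ replaced by $\tfrac1\phi(\widetilde N-\zeta)$ and the $A_N$-, $B^N$-terms inserted from (\ref{geq1}) and (\ref{geq3}), compared with $\bnab_U\widetilde N=-A_{\widetilde N}U+\tau^{\widetilde N}(U)\widetilde N$, yields the eighth identity; its $\widetilde N$-coefficient automatically reproduces $\tau^{\widetilde N}$, a convenient consistency check. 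Finally, starting from $\stackrel{\star}{A}_{\widetilde\xi}U=-\widetilde P(\widetilde\nabla_U\widetilde\xi)$, I would expand $\widetilde\nabla_U\widetilde\xi=\widetilde\nabla_U\!\bigl(\tfrac1\phi\xi\bigr)$ using the fifth identity and $B^N(U,\xi)=0$ from (\ref{deg}), substitute (\ref{geq4}), and apply the fourth identity; the simplifications $\widetilde P\xi=0$ and $g(\sn U,\zeta)=B^N(\zeta,U)$ (from (\ref{autoadjoint}) and symmetry of $B$, with $\sn U\in\Gamma(S(TM))$) deliver the seventh identity.

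I do not expect a genuine conceptual obstacle: the whole argument is substitution into (\ref{geq1})--(\ref{geq4}) and sorting of components. The one place that needs care is the seventh identity, where $\stackrel{\star}{A}_{\widetilde\xi}$ and $\widetilde P$ refer to the \emph{new} screen $\ker\widetilde\eta$ and the \emph{new} induced connection $\widetilde\nabla$, so that the fourth and fifth identities (not merely $\widetilde\xi=\tfrac1\phi\xi$) must be in hand before the $\sn$-terms can be collected, and one must keep in mind that $\widetilde P$ annihilates $\xi$ while fixing $\sn U$.
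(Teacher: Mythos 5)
Your proposal is correct, and there is nothing to compare it against inside the paper: the authors state Lemma \ref{change} with a citation to \cite{C3} and give no proof, so your direct verification by substituting $N=\tfrac1\phi(\widetilde N-\zeta)$ into the Gauss--Weingarten equations and sorting tangential, radical and transversal components is exactly the standard computation from the cited source. All eight items check out as you describe; in particular your route to item \ding{198} (expand $\widetilde\nabla_U\widetilde\xi=\widetilde\nabla_U(\tfrac1\phi\xi)$ via items \ding{196} and \ding{195}, use $B^N(U,\xi)=0$, then apply $\widetilde P$ with $\widetilde P\xi=0$ and $g(\sn U,\zeta)=B^N(\zeta,U)$) reproduces the stated formula. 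The only hypothesis worth making explicit is that $\widetilde N$ is again a \emph{null} rigging, in line with the paper's standing convention: item \ding{193} is precisely this nullity condition, and it is also what guarantees that the tangential part of $\bnab_U\widetilde N$ in your derivation of item \ding{199} is valued in the new screen distribution, so that it can legitimately be identified with $-A_{\widetilde N}U$.
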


Assume that $\xi$ is defined on an open subset of $\Bm$ containing $M$. Using (\ref{deg}), one gets that for vector fields $U,V$ tangent to the null hypersurface, one has $(L_\xi)(U,V)\bg=(L_\xi g)(U,V)$. Hence, if $\xi$ is a killing vector field for $\Bm$ then, $\xi$ is a killing vector field for $M$. Using Gauss-Weingarten equations, it is nothing to check that
\begin{equation}\label{lxi}
L_\xi g=2B.
\end{equation}

\begin{definition}[\cite{compere2006introduction}]
	A {\bfseries killing horizon} is a rigged null hypersurface $(M,g,\xi)$ of a spacetime $(\Bm,\bg)$, whose the rigged $\xi$ is defined on an open subset of $\Bm$ containing $M$ and is a killing vector field in $\Bm$.
\end{definition}
The rigidity theorem of black holes proved firstly by Hawking states that: in any stationary and asymptotically flat spacetime with a black hole, the event horizon is a Killing horizon \cite{hawking1973large}.

A null hypersurface $M$ is said to be \textit{totally umbilical}
(resp. \textit{totally geodesic}) if there exists a smooth
function $\rho$ on $M$ such that at each $p\in M$ and for all $u,v
\in T_{p}M$, $B^{N}(p)(u,v) = \rho(p) g(u,v)$  (resp. $B^{N}$
vanishes identically on $M$). These are intrinsic notions on any
null hypersurface in the way that they do not depend on the choice of the rigging (see Lemma \ref{change} item \ding{194}). The total umbilicity and the total geodesibility conditions for $M$ can also be written
respectively as $\sn ~=\rho P$ and $\sn ~= 0$. Also, we say that the screen
distribution $S(TM)$ is \textit{totally umbilical} (resp.
\textit{totally geodesic}) if $\displaystyle C^{N}(U,PV) = \lambda
g(U,V)$ for all $U,V\in \Gamma(TM)$ (resp. $C^{N} = 0$), which is
equivalent to $A_{N} = \lambda P$ (resp. $A_{N}= 0$). It is
noteworthy to mention that the shape operators $\sn~$ and $A_{N}$
are $S(TM)-$valued. From equation \eqref{lxi}, one deduce the following.

%\begin{proposition}
%A null hypersurface $M$ of spacetime is a killing horizon if and only if $M$ is totally geodesic.
%\end{proposition}

One can also say that a rigged null hypersurface is a killing horizon if and only if the Weingarten map is $TM^\perp-$valued.

\begin{lemma}\cite{DB}\label{dbumbilical}
	Let $(M,g,N)$ be a totally umbilical rigged null
	hypersurface of an $(n+2)-$dimensional pseudo-Riemannian
	space-form. Then $\rho$ from the above definition satisfies
	\begin{eqnarray}
	\xi(\rho)+\rho\tau^N(\xi)-\rho^2&=&0\label{dbumbilical1}\\
	PU(\rho)+\rho\tau^N(PU)&=&0,
	\end{eqnarray}
	for all $U\in \Gamma(TM)$.
\end{lemma}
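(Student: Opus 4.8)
The plan is to obtain both identities from the Codazzi equation of the immersion $x:M\to\Bm(c)$, specialised to a space form and to the umbilicity condition $\sn=\rho P$, i.e. $B=\rho g$.

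The first step is to record the Codazzi equation for $B$. Differentiating the Gauss formula \eqref{geq1} once more, using \eqref{geq3} to expand $\bnab_\cdot N$, and retaining only the component along $N$ — equivalently, pairing $\bar R(X,Y)Z$ with $\xi$, which annihilates every tangential contribution because $\bg(\xi,TM)=0$ — one obtains, for all $X,Y,Z\in\Gamma(TM)$,
$$\bg(\bar R(X,Y)Z,\xi)=(\nabla_XB)(Y,Z)-(\nabla_YB)(X,Z)+\tau^N(X)B(Y,Z)-\tau^N(Y)B(X,Z),$$
where, $B$ being symmetric, $\nabla$ is torsion-free and the terms $B(\nabla_XY,Z)-B(\nabla_YX,Z)-B([X,Y],Z)$ drop out.

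The second step uses that $\Bm(c)$ is a space form: $\bar R(X,Y)Z=c\big(\bg(Y,Z)X-\bg(X,Z)Y\big)$, and since $\bg(X,\xi)=\bg(Y,\xi)=0$ for $X,Y\in\Gamma(TM)$, the left-hand side above vanishes identically. Substituting $B=\rho g$ then requires one genuinely careful point: $\nabla$ is not metric, and from $\bnab\bg=0$ one has $(\nabla_Xg)(Y,Z)=B(X,Y)\eta(Z)+B(X,Z)\eta(Y)$, hence $(\nabla_XB)(Y,Z)=X(\rho)\,g(Y,Z)+\rho^2\big(g(X,Y)\eta(Z)+g(X,Z)\eta(Y)\big)$. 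Feeding this into the Codazzi identity, the $\rho^2 g(X,Y)\eta(Z)$ terms cancel by symmetry of $g$, and the identity collapses to
$$\omega(X)\,g(Y,Z)=\omega(Y)\,g(X,Z),\qquad X,Y,Z\in\Gamma(TM),$$
where $\omega(X):=X(\rho)+\rho\,\tau^N(X)-\rho^2\eta(X)$.

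The last step reads off the two equations. Pick $Y$ in the screen with $g(Y,Y)\neq0$ (possible since $g$ is non-degenerate on $S(TM)$); taking $X=\xi$, $Z=Y$ and using $g(\xi,\cdot)=0$ together with $\eta(\xi)=\bg(N,\xi)=1$ gives $\omega(\xi)=\xi(\rho)+\rho\tau^N(\xi)-\rho^2=0$, which is \eqref{dbumbilical1}. Next, for a prescribed $X\in\Gamma(S(TM))$, pick $Z\in S(TM)$ with $g(X,Z)=0$ and $Y\in S(TM)$ with $g(Y,Z)\neq0$ (again by non-degeneracy on the screen); the displayed relation forces $\omega(X)=0$, so $\omega(PU)=PU(\rho)+\rho\tau^N(PU)=0$ since $\eta$ vanishes on $S(TM)$, which is the second equation. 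The real work is the sign bookkeeping in the Codazzi computation — the asymmetry produced by the rotation $1$-form $\tau^N$ — and, above all, not losing the non-metricity term $(\nabla_Xg)(Y,Z)$, which is precisely what manufactures the $\rho^2$ contributions; once $\omega(X)\,g(Y,Z)=\omega(Y)\,g(X,Z)$ is established, everything else is immediate.
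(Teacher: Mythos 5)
The paper itself gives no proof of this lemma (it is quoted from \cite{DB}), and your argument is the standard one for it: the Codazzi equation (\ref{ge3}), vanishing of $\langle\bar R(X,Y)Z,\xi\rangle$ in a space form, substitution of $B=\rho g$ together with the non-metricity formula (\ref{met}), yielding $\omega(X)g(Y,Z)=\omega(Y)g(X,Z)$ with $\omega=d\rho+\rho\,\tau^N-\rho^2\eta$; your sign bookkeeping and the $\rho^2$ terms produced by $(\nabla_X g)$ are correct, and the derivation of (\ref{dbumbilical1}) by taking $X=\xi$ and $Y=Z$ non-null in the screen is sound (such a $Y$ exists because $g$ is non-degenerate on $S(TM)$).

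The one weak point is the final selection step for the second identity: choosing $Z\in S(TM)$ with $g(X,Z)=0$ and then $Y\in S(TM)$ with $g(Y,Z)\neq0$ requires the screen to have rank at least two, i.e. $n\geq 2$. For $n=1$, which the paper's standing hypothesis $n\geq1$ formally allows, the relation $\omega(X)g(Y,Z)=\omega(Y)g(X,Z)$ is vacuous on the one-dimensional screen, and the second identity can genuinely fail there: every null surface in $\R^{3}_{1}$ is totally umbilical in the sense used here (the screen is a line, so $B=\rho g$ automatically), yet with the generic Monge rigging one has $\tau^N=0$ while $\rho$ need not be constant along the screen leaves. So the lemma should be read with $n\geq2$; within that range your proof is complete and matches the intended argument of \cite{DB}.
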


%Let denote by $R$ the Riemannian curvature tensor of $\nabla$. The following are Gauss-Codazzi equations
%\begin{eqnarray}
% \bg(\bar R(U,V)PW,N)&=&g((\nabla_UA_N)V,PW)-g((\nabla_VA_N)U,PW)\nonumber\\
%&+&\tau(V)g(A_NU,PW)-\tau(U)g(A_NV,PW)\label{geq5}\\
%\bg(\overline R(U,V)W,\xi)&=&g((\nabla_U\sn)V,W)- g((\nabla_V\sn)U,W)\nonumber\\
%&+&\tau(U)g(\sn V,W)-\tau(V)g(\sn V,W).\label{geq6}
%\end{eqnarray}

The induced connection $\nabla$ is torsion-free but may not be $g-$metric unless $M$ is totally geodesic. In fact we have for all sections $U, V, W$ of $TM$,
\begin{equation}\label{met}
(\nabla_Ug)(V,W)=\eta(V)B(U,W)+\eta(W)B(U,V).
\end{equation}

One defines the mean curvature $\stackrel{\star}{S}_1$ of the null hypersurface $M$ and the mean curvature $S_1$ of the screen distribution $S(TM)$ as the trace of the endomorphism fields $\sn$ and $A_N$ respectively.  Thus,
$$ \stackrel\star S_1=tr(\sn) \mbox{ and } S_1=tr(A_N).$$

\begin{definition}\label{minimal}
	The null hypersurface (resp. the screen distribution $S(TM)$) is said {\bfseries minimal} if $\stackrel\star S_1$ (resp. $S_1$) identically vanishes.
\end{definition}

\begin{lemma}
	For every vector field $U\in\Gamma(TM)$, one has
	\begin{equation}
	tr(\nabla_U\!\!\sn)=U\cdot\!\stackrel{\star}{S}_1
	\end{equation}
\end{lemma}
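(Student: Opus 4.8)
The plan is to obtain the identity from the general fact that, for any linear connection on a vector bundle, the induced covariant derivative commutes with tensor contraction, applied here to the induced connection $\nabla$ on $TM$ and to $\sn$ regarded as a $(1,1)$-tensor on $TM$. First I would make the statement precise: for $U\in\Gamma(TM)$, $\nabla_U\!\sn$ denotes the endomorphism field of $TM$ that sends $V\in\Gamma(TM)$ to
\[\nabla_U(\sn V)-\sn(\nabla_U V).\]
A routine check shows this expression is $\mathcal C(M)$-linear in $V$, so $\nabla_U\!\sn$ is a genuine $(1,1)$-tensor on $M$ and its trace is well defined. The only structural properties of $\sn$ that will be used are that it is an endomorphism of the whole bundle $TM$ with values in $S(TM)$, so in particular $\sn\xi=0$; the fact that $\nabla$ is torsion-free but not $g$-metric is irrelevant for this computation.

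Next I would compute the trace in a local frame. On a coordinate neighbourhood, pick a frame $\{E_a\}_{0\le a\le n}$ of $TM$ adapted to the decomposition (\ref{decomp0}), with $E_0=\xi$ and $E_1,\dots,E_n$ a frame of $S(TM)$, together with its dual coframe $\{\theta^a\}$; write $\sn E_a=\sum_b S^b_a E_b$ and $\nabla_U E_a=\sum_b\Gamma^b_a E_b$. Then
\[tr(\nabla_U\!\sn)=\sum_a\theta^a\big((\nabla_U\!\sn)E_a\big)=\sum_a\theta^a\big(\nabla_U(\sn E_a)\big)-\sum_a\theta^a\big(\sn(\nabla_U E_a)\big),\]
and expanding the two sums in the frame one finds that the first equals $\sum_a U(S^a_a)+\sum_{a,b}S^b_a\Gamma^a_b$ and the second equals $\sum_{a,b}\Gamma^b_a S^a_b$. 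After relabelling $a\leftrightarrow b$ these two double sums coincide, hence cancel, leaving $tr(\nabla_U\!\sn)=U\big(\sum_a S^a_a\big)=U(tr\,\sn)=U\cdot\sS_1$, which is the desired identity.

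Finally I would add a short remark. The computation is manifestly frame-independent, the choice $E_0=\xi$ being only a convenience; since $\sn E_a\in\Gamma(S(TM))$ one has $\theta^0(\sn E_a)=0$, so $S^0_0=0$ and the trace over the full frame coincides with the trace of $\sn$ viewed as an endomorphism of $S(TM)$, in accordance with the definition $\sS_1=tr(\sn)$. The same degeneracy $\sn\xi=0$ shows that differentiating the screen-valued field $\sn V$ with the screen connection $\snab$ instead of $\nabla$ would not change the trace, the only possible extra contribution being a multiple of $C(U,\sn\xi)$, which vanishes. There is thus no real obstacle here: the lemma is just the specialisation to $\sn$ of the universal identity $tr(\nabla_U T)=U(tr\,T)$ valid for every $(1,1)$-tensor $T$ and every linear connection, and the only computational content is arranging the index bookkeeping so that the connection coefficients visibly cancel.
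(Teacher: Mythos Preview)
Your argument is correct. The identity you prove is exactly the universal fact that contraction commutes with covariant differentiation for any linear connection, and your frame computation showing the connection coefficients cancel is clean and complete.

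The paper's proof follows a genuinely different route: it invokes the diagonalizability of $\sn$, picks a quasi-orthonormal eigenframe $\{\xi,\sE_1,\dots,\sE_n\}$ with eigenvalues $0,\sk_1,\dots,\sk_n$, and computes the trace as $\sum_i\epsilon_i\langle(\nabla_U\sn)\sE_i,\sE_i\rangle+\langle(\nabla_U\sn)\xi,N\rangle$, checking directly that each diagonal term contributes $U\cdot\sk_i$ and the $\xi$-term vanishes. That approach leans on two extra ingredients you do not need: the self-adjointness of $\sn$ with respect to $g$ restricted to the screen (to get an eigenframe), and the partial metricity formula (\ref{met}) (to see that $\langle\nabla_U\sE_i,\sE_i\rangle=0$). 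Your argument is therefore more general---it applies to any $(1,1)$-tensor on $TM$, diagonalizable or not, and never touches the metric---while the paper's version has the minor advantage of making the individual eigenvalue contributions visible, which fits its subsequent use of the eigenframe in the Raychaudhuri computation.
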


\begin{proof}
	Since $\sn$ is diagonalizable, there exists a quasi-orthonormal frame $\{\sE_0=\xi,\sE_1,\ldots,\sE_n\}$ of eigenvectors with corresponding real eigenfunctions $\stackrel{\star}{k}_0=0,\stackrel{\star}{k}_1,\ldots,\stackrel{\star}{k}_n$ such that
	\begin{equation*}
	\langle \sE_i, \sE_j\rangle=\epsilon_i\delta_{ij}, \mbox{ with } \epsilon_i=\pm1,~~ \forall i,j=1,\ldots,n.
	\end{equation*} 
	It follows that
	$$tr(\nabla_U\!\!\!\sn)=\sum_{i=1}^{n}\epsilon_i\langle\left(\nabla_U\!\!\sn\right)\sE_i,\sE_i\rangle+\langle\left(\nabla_U\!\!\sn\right)\xi,N\rangle=\sum_{i=1}^{n}U\cdot\!\sk_i=U\cdot\!\sS_1.$$
\end{proof}

Gauss-Codazzi equations of $(M,g,N)$ are given for $U,V,W\in\Gamma(TM)$ and $X\in\Gamma(S(TM))$ by
\begin{align}
\langle\overline{R}(U,V)W,X\rangle&=\langle R(U,V)W,X\rangle\nonumber\\\label{ge1}&+B(U,W)C(V,X)-B(V,W)C(U,X)\\
\langle\overline{R}(U,V)W,N\rangle&=\langle R(U,V)W,N\rangle\\
\langle\overline{R}(U,V)X,N\rangle&=\left(\nabla_UC\right)(V,X)-\left(\nabla_VC\right)(U,X)\nonumber\\&+C(U,X)\tau(V)-C(V,X)\tau(U),\label{ge2}\\
\langle\overline{R}(U,V)W,\xi\rangle&=\left(\nabla_UB\right)(V,W)-\left(\nabla_VB\right)(U,W)\nonumber\\&+B(V,W)\tau(U)-B(U,W)\tau(V).\label{ge3}\\
\langle\overline{R}(U,V)\xi,N\rangle&=C(\sn U,PV)-C(U,\sn\! PV)-d\tau(U,V),\label{ge4}
\end{align}
where $R$ is the Riemann curvature of $\nabla$. As definition of the Ricci tensor of $\overline{R}$ we use 
\begin{equation*}
\overline{Ric}(\overline{U}, \overline{V})=tr\left(\overline{W}\mapsto\overline{R}(\overline{W},\overline{U})\overline{V}\right),
\end{equation*}
for all $\overline{U}, \overline{V}\in\Gamma(\overline{M})$.

\begin{lemma}
	The following equation holds
	\begin{equation}\label{ric}
	\overline{Ric}(\xi,\xi)=\xi\cdot\!\sS_1+\tau(\xi)\!\sS_1-tr\!\left(\sn^2\right).
	\end{equation}
\end{lemma}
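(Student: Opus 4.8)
The plan is to compute $\overline{Ric}(\xi,\xi)$ directly from its definition $\overline{Ric}(\xi,\xi)=\mathrm{tr}\bigl(\overline{W}\mapsto\overline{R}(\overline{W},\xi)\xi\bigr)$ by splitting the trace over a suitable frame adapted to the decomposition \eqref{decomp}. First I would fix the quasi-orthonormal screen frame $\{\sE_1,\dots,\sE_n\}$ of eigenvectors of $\sn$ used in the previous lemma, together with $\xi$ and $N$, so that $\{\xi,\sE_1,\dots,\sE_n,N\}$ is a frame for $T\Bm_{|M}$. Because $\overline{R}(\xi,\xi)\xi=0$, the $\overline{W}=\xi$ term drops, and since $\langle\overline{R}(N,\xi)\xi,\cdot\rangle$ is paired with $\xi$ itself via the duality $\langle\xi,N\rangle=1$, the trace becomes $\overline{Ric}(\xi,\xi)=\sum_{i=1}^n\epsilon_i\langle\overline{R}(\sE_i,\xi)\xi,\sE_i\rangle+\langle\overline{R}(\xi,\xi)\xi,N\rangle$, where the last term vanishes. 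So everything reduces to evaluating $\sum_i\epsilon_i\langle\overline{R}(\sE_i,\xi)\xi,\sE_i\rangle$.

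Next I would feed the Gauss--Codazzi identity \eqref{ge3} into this sum. Taking $U=\sE_i$, $V=\xi$, $W=\xi$ in \eqref{ge3} and using the degeneracy relations \eqref{deg}, namely $B(\cdot,\xi)=0$ and $\sn\xi=0$, one gets $\langle\overline{R}(\sE_i,\xi)\xi,\xi\rangle=(\nabla_{\sE_i}B)(\xi,\xi)-(\nabla_\xi B)(\sE_i,\xi)+\cdots$; but this computes the $\xi$-component, not the $\sE_i$-component, so instead I should use the antisymmetry $\langle\overline{R}(\sE_i,\xi)\xi,\sE_i\rangle=-\langle\overline{R}(\xi,\sE_i)\xi,\sE_i\rangle$ and then invoke the first Bianchi identity together with \eqref{ge1} to trade curvature of $\bg$ for curvature of $\nabla$ plus $B,C$ terms. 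Concretely, \eqref{ge1} with $X=\sE_i$ gives $\langle\overline{R}(\xi,\sE_i)\sE_i,\xi\rangle=\langle R(\xi,\sE_i)\sE_i,\xi\rangle$ since $B(\xi,\cdot)=0$ kills the extra terms — but again I want $\langle\overline{R}(\sE_i,\xi)\xi,\sE_i\rangle$. The clean route is: apply the pair symmetry of $\overline R$ (valid since $\bnab$ is Levi-Civita) to write $\langle\overline{R}(\sE_i,\xi)\xi,\sE_i\rangle=\langle\overline{R}(\xi,\sE_i)\sE_i,\xi\rangle$, then use \eqref{ge1} which, after the $B(\xi,\cdot)=0$ simplification, equals $\langle R(\xi,\sE_i)\sE_i,\xi\rangle$. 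Summing over $i$ produces $\sum_i\epsilon_i\langle R(\xi,\sE_i)\sE_i,\xi\rangle$, the ``$\xi$-Ricci'' of the induced connection on the screen, and one recognizes this, via \eqref{geq4} ($\nabla_U\xi=-\sn U-\tau(U)\xi$), as the divergence-type expression $\xi\cdot\sS_1+\tau(\xi)\sS_1-\mathrm{tr}(\sn^2)$ after expanding $R(\xi,\sE_i)\sE_i=\nabla_\xi\nabla_{\sE_i}\xi-\nabla_{\sE_i}\nabla_\xi\xi-\nabla_{[\xi,\sE_i]}\xi$ and pairing with $\sE_i$.

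Thus the main computation is the expansion of $\sum_i\epsilon_i\langle R(\xi,\sE_i)\sE_i,\xi\rangle$: substitute $\nabla_\bullet\xi=-\sn\bullet-\tau(\bullet)\xi$ everywhere, use that $\sn$ is $S(TM)$-valued and $g$-symmetric, use $\sn\xi=0$ and $\langle\sE_i,\xi\rangle=0$, and collect the surviving terms. The $\nabla_\xi\nabla_{\sE_i}\xi$ term contributes $-\langle\nabla_\xi(\sn\sE_i),\sE_i\rangle-\xi(\tau(\sE_i))\langle\xi,\sE_i\rangle$, whose trace over $i$ is $-\mathrm{tr}(\nabla_\xi\sn)+\mathrm{tr}(\sn\circ\nabla_\xi(\cdot))$-type pieces; combined with the curvature lemma $\mathrm{tr}(\nabla_U\sn)=U\cdot\sS_1$ this yields the $\xi\cdot\sS_1$ term. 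The $\tau$-cross terms assemble into $\tau(\xi)\sS_1$, and the $\sn^2$ term comes from $\langle\sn(\sn\sE_i),\sE_i\rangle$. The main obstacle I anticipate is bookkeeping: the induced connection $\nabla$ is not metric (see \eqref{met}), so one cannot freely move $\nabla$ past the pairing $\langle\,,\,\rangle$, and one must carefully track the $\eta(V)B(U,W)$ correction terms — though many of these vanish here because $B(\cdot,\xi)=0$ and $\eta(\sE_i)=0$ on the screen. Getting those cancellations right, and correctly handling the $[\xi,\sE_i]$ bracket term (which has both a screen and a $\xi$ part), is where care is needed; everything else is linear algebra with the eigenframe.
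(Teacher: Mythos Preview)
Your overall strategy---trace $\overline{R}(\cdot,\xi)\xi$ over the adapted frame $\{\xi,\sE_1,\dots,\sE_n,N\}$ and reduce to a screen computation---is sound and in fact more direct than the paper's argument, which goes through the divergence of the first Newton transformation $\st_1=-\sS_1 I+\sn$ and only specializes to $U=\xi$ at the end. However, there is a genuine error in your ``clean route''.

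After applying pair symmetry you arrive at $\langle\overline{R}(\xi,\sE_i)\sE_i,\xi\rangle$ and then invoke \eqref{ge1}. But \eqref{ge1} is stated only for the last argument $X\in\Gamma(S(TM))$; with last argument $\xi$ it is \eqref{ge3} that applies, not \eqref{ge1}. Worse, the target you write, $\langle R(\xi,\sE_i)\sE_i,\xi\rangle$, is \emph{identically zero}: $R$ is the curvature of the induced connection, so $R(\xi,\sE_i)\sE_i\in TM$, and $g(\cdot,\xi)$ vanishes on all of $TM$. So as written your sum collapses to $0$, not to the Raychaudhuri right-hand side. (Your displayed expansion $R(\xi,\sE_i)\sE_i=\nabla_\xi\nabla_{\sE_i}\xi-\cdots$ confirms the slip: the right-hand side is the expansion of $R(\xi,\sE_i)\xi$, not of $R(\xi,\sE_i)\sE_i$.)

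The fix is minor. Either skip pair symmetry and apply \eqref{ge1} directly to $\langle\overline{R}(\sE_i,\xi)\xi,\sE_i\rangle$ with $X=\sE_i\in S(TM)$; the $B(\cdot,\xi)=0$ terms then kill the correction and you legitimately obtain $\langle R(\sE_i,\xi)\xi,\sE_i\rangle$, which you can expand via \eqref{geq4} as you intended. Or keep pair symmetry and use \eqref{ge3} with $U=\xi$, $V=W=\sE_i$: this gives $(\nabla_\xi B)(\sE_i,\sE_i)-(\nabla_{\sE_i}B)(\xi,\sE_i)+B(\sE_i,\sE_i)\tau(\xi)$, and after summing and using the previous lemma ($\mathrm{tr}(\nabla_\xi\sn)=\xi\cdot\sS_1$) together with $(\nabla_{\sE_i}B)(\xi,\sE_i)=\langle\sn^{2}\sE_i,\sE_i\rangle$, you land exactly on $\xi\cdot\sS_1+\tau(\xi)\sS_1-\mathrm{tr}(\sn^{2})$. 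Either route is shorter than the paper's Newton-transformation detour; you just need to keep straight which slot carries $\xi$ and therefore which Gauss--Codazzi identity is available.
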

\begin{proof}
	Let $\st_1:=-\sS_1\!\!I+\sn:\Gamma(TM)\to\Gamma(TM)$ be the first Newton transformation of $\sn$, as defined in \cite{AF}. By the Lemma \ref{tildeg}, $\widetilde{g}$ is a non-degenerate metric on $M$ and it is easy to check that the precedent frame $\{\sE_0=\xi,\sE_1,\ldots,\sE_n\}$ is an $\widetilde{g}-$orthonormal frame. Hence, 
	$$div^\nabla\left(\st_1\right)=tr\left(\nabla\st_1\right)=\sum_{\alpha=0}^{n}\epsilon_\alpha\left(\nabla_{\sE_\alpha}\!\!\!\st_1\right)\sE_\alpha=\sum_{\alpha=0}^{n}\epsilon_\alpha\left[(\nabla_{\sE_\alpha}\!\!\!\sn)\sE_\alpha-\sE_\alpha(\sS_1)\sE_\alpha\right].$$
	For $U\in\Gamma(TM)$, using covariant derivative formula (\ref{met}) one has 
	\begin{align*}
	\big\langle div^\nabla\left(\st_1\right),U\big\rangle&=\big\langle\sum_{\alpha=0}^{n}\epsilon_\alpha\nabla_{\sE_\alpha}\!\!\!\sn\sE_\alpha-\sn\nabla_{\sE_\alpha}\!\!\!\sE_\alpha,U\big\rangle-\sum_{\alpha=0}^{n}\epsilon_\alpha\sE_\alpha(\sS_1)g(\sE_\alpha,U)\\
	&=\sum_{\alpha=0}^{n}\epsilon_\alpha\langle\sE_\alpha,(\nabla_{\sE_\alpha}\!\!\!\sn)U\rangle-tr\left(\sn^2\right)\eta(U)-PU(\sS_1).
	\end{align*}
	Using Gauss-Codazzi equation (\ref{ge3}), one has $$\langle\sE_\alpha,(\nabla_{\sE_\alpha}\!\!\!\sn)U\rangle=\langle\sE_\alpha,(\nabla_{U}\!\!\!\sn)\sE_\alpha\rangle+\langle\overline{R}(\sE_\alpha,U)\sE_\alpha,\xi\rangle+B(\sE_\alpha,\sE_\alpha)\tau(U)-B(\sE_\alpha,U)\tau(\sE_\alpha).$$ 
	Above relation becomes 
	\begin{align*}
	\big\langle div^\nabla\left(\st_1\right),U\big\rangle&=tr\left(\nabla_{U}\!\!\!\sn\right)+\sum_{\alpha=0}^{n}\epsilon_\alpha\left[\langle\overline{R}(\sE_\alpha,U)\sE_\alpha,\xi\rangle-B(\sE_\alpha,U)\tau(\sE_\alpha)\right]\\&+\tau(U)\sS_1-tr\left(\sn^2\right)\eta(U)-PU(\sS_1)\\
	\big\langle div^\nabla\left(\st_1\right),U\big\rangle&=tr\left(\nabla_{U}\!\!\!\sn\right)-\overline{Ric}(U,\xi)-\tau\left(\sn\!\! U\right)+\tau(U)\sS_1-tr\left(\sn^2\right)\eta(U)-PU(\sS_1).
	\end{align*}
	Using the above Lemma and taking $U=\xi$, one obtains (\ref{ric}).
\end{proof}
Equation (\ref{ric}) is called null Raychaudhuri's equation. Since $\sn$ is diagonalizable, and $\overline{Ric}(\xi,\xi)=0$ when sectional curvature is constant, the following result holds.
\begin{theorem}[\cite{AF}]\label{geomini}
	A null hypersurface $(M,g)$ of a semi-Riemannian manifold $(\Bm(c),\bg)$ with constant sectional curvature $c$ is totally geodesic if and only if it is minimal.
\end{theorem}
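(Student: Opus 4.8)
The plan is to read the equivalence off directly from the null Raychaudhuri equation (\ref{ric}). The starting observation is that for a semi-Riemannian space form $\Bm(c)$ of dimension $n+2$ one has $\overline{Ric}=(n+1)c\,\bg$, so that $\overline{Ric}(\xi,\xi)=(n+1)c\,\bg(\xi,\xi)=0$ because $\xi$ is a null vector field. Hence (\ref{ric}) specializes on a space form to
\begin{equation*}
\xi\cdot\!\sS_1+\tau(\xi)\!\sS_1=tr\!\left(\sn^2\right).
\end{equation*}

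One implication is then immediate: if $M$ is totally geodesic, then $\sn=0$ by definition, whence $\sS_1=tr(\sn)=0$ and $M$ is minimal. This direction uses neither the space-form hypothesis nor Raychaudhuri.

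For the converse, I would assume $M$ minimal, i.e.\ $\sS_1\equiv0$. Then the left-hand side of the displayed identity vanishes identically, forcing $tr\!\left(\sn^2\right)\equiv0$. Here I invoke the diagonalizability of $\sn$ already used above: in the quasi-orthonormal eigenframe $\{\sE_0=\xi,\sE_1,\ldots,\sE_n\}$ with real eigenfunctions $\sk_0=0,\sk_1,\ldots,\sk_n$, the endomorphism $\sn^2$ is again diagonal in that frame, with eigenfunctions $0,\sk_1^2,\ldots,\sk_n^2$, so that (using the same trace convention as in the excerpt, the $\xi$-slot contributing $\langle\sn^2\xi,N\rangle=0$)
\begin{equation*}
tr\!\left(\sn^2\right)=\sum_{i=1}^{n}\sk_i^{\,2}.
\end{equation*}
Since each $\sk_i$ is real, $\sum_{i=1}^{n}\sk_i^{\,2}\equiv0$ forces $\sk_i\equiv0$ for every $i$, hence $\sn\equiv0$, i.e.\ $M$ is totally geodesic.

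The one point requiring care is precisely this last step: the screen metric need not be positive definite, so a priori the vanishing of $tr(\sn^2)$ need not force $\sn=0$. What rescues the argument is that $tr(\sn^2)$ is the \emph{unsigned} sum of squares of the (real) eigenvalues of $\sn$ — a metric-independent fact once diagonalizability over $\R$ is known — rather than a sum of signed terms $\epsilon_i\sk_i^2$. Thus the real content of the theorem is the diagonalizability of $\sn$ with real eigenfunctions, which we are granted; the identity $\overline{Ric}(\xi,\xi)=0$ and the trace bookkeeping are routine.
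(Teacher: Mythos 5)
Your proof is correct and follows exactly the paper's own route: the paper deduces Theorem \ref{geomini} in a single sentence from the null Raychaudhuri equation (\ref{ric}), using that $\overline{Ric}(\xi,\xi)=0$ in a space form and that $\sn$ is diagonalizable with real eigenfunctions, so that $tr(\sn^2)$ is a sum of squares whose vanishing forces $\sn=0$. Your explicit justification of that last step (the trace of $\sn^2$ equals $\sum_i \sk_i^2$ independently of the screen metric's signature) is precisely the content the paper leaves implicit.
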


Prove of this theorem is straightforward from null Raychaudhuri's equation \eqref{ric}. A consequence of this result is that if a semi-Riemannian manifold $\Bm$ admits a null hypersurface which is minimal but not totally geodesic then, $\Bm$ does not have constant sectional curvature. For the end of this section, we assume that $(\Bm,\bg)$ is a spacetime and $\xi$ is past-directed. If one assumes in addition that Einstein's equation $\overline{Ric}-\frac12\overline{\mathbf{R}}\bg=8\pi\mathbf{T}$ holds, then In Raychaudhuri's equation one can replace the Ricci tensor by the energy-momentum tensor to get
\begin{equation}
8\pi\mathbf{T}(\xi,\xi)=\xi\cdot\!\sS_1+\tau(\xi)\!\sS_1-tr\!\left(\sn^2\right).
\end{equation} 
From this equation, one sees that if the null energy condition holds (thus $\mathbf{T}(v,v)\geq0$ for any null vector field $v$), then $M$ is totally geodesic if and only if $M$ is minimal. This is a well-known result \cite{GJ}. A variant of this theorem is the null splitting theorem of Gregory J. Galloway \cite{gregory2000} where the null energy condition is taken as $\overline{Ric}(v,v)\geq0$. 

The outgoing and ingoing null expansions of a rigged null hypersurface $(M,g,N)$ of a spacetime $(\Bm,\bg)$ with respect the directions $\ell=-\xi$ and $k=N$ are respectively given by
\begin{equation}
\theta^{(\ell)}=tr(\sn)\qquad\mbox{and}\qquad\theta^{(k)}=-tr(A_N)
\end{equation}

\begin{definition}[\cite{krishnan2014quasi}]
	A {\bfseries Non-Expanding Horizon} (NEH) in a $4-$dimensional space-time $(\Bm,\bg)$ is a rigged null hypersurface $(M,g,N)$ such that:
	\begin{itemize}
		\item $M$ has topology $\mathbb{S}^2\times\R$;
		\item the ingoing null expansion $\theta^{(\ell)}$ vanishes on $M$;
		\item Einstein's equation hold, and the matter stress-energy tensor $\mathbf{T}_{ab}$ is such
		that for any future directed null-normal $-\mathbf{T}^a_b\ell^b$ is future causal (dominant energy condition).
	\end{itemize}
\end{definition}
The first condition means there exists an homeomorphism $f:\mathbb{S}^2\times\R\to M\subset\Bm$, such that for any $x\in\mathbb{S}^2$, the image of the map $t\mapsto f_x(t)=f(x,t)$ is a null curve in $\Bm$ (in fact, an integral line of $\xi$), and immersions $x\to f_t(x)=f(t,x)$ form a foliation of $M$ by submanifolds orthogonal to $\xi$ and homeomorphic to the sphere, and the distribution corresponding to this foliation is a screen distribution.

%\begin{theorem}
%Let $M$ be a null hypersurface of a semi-Riemannian manifold satisfying the null convergence condition. Then, $M$ is totally geodesic if and only if $M$ is minimal.
%\end{theorem}

\begin{definition}
	A manifold $M$ endowed with a torsion-free linear connection $\nabla$ is said to be a {\bfseries locally flat} manifold if the Riemannian curvature of $\nabla$ identically vanishes.
\end{definition}

\section{Marginally (Outer) Trapped Submanifolds}\label{section3}

In what follows, $(\Bm,\bg)$ is an $(n+2)-$dimensional spacetime, i.e a time-orientable Lorentzian manifold.

Let $S$ be a spacelike codimension-two submanifold of the spacetime $\Bm$. (Some authors call $S$ a surface.) Let $\ell$ and $k$ be two future-directed lightlike vector fields of $\Bm$ normalized by $\langle \ell,k\rangle=-1$ and such that $TS^\perp=span\{\ell,k\}$. We assume that $\ell$ is in the outgoing direction and $k$ ingoing direction. For all sections $X,Y$ of the tangent space $TS$, Gauss and Weingarten formulas of the immersion $S\to\Bm$ are given by

\begin{align}
\bnab_XY&=\snab_XY+\Pi(X,Y)\label{geq1s}\\
\bnab_Xk&=-A^+_{k}X+\snab^\perp_Xk\label{geq2s}\\
\bnab_X\xi^+&=-A^+_{\ell}X+\snab^\perp_X\ell,\label{geq3s}
\end{align}
where  $\snab$ is the Levi-Civita connection of $S$, the symmetric tensor $\Pi$ is the second fundamental form, $A^+_{k},A^+_{\ell}:\Gamma(TS)\to\Gamma(TS)$ are the shape operators with respect to $k$ and $\ell$ respectively.

The expansions of $S$ with respect to $k$ and $\ell$ are defined as the traces $\theta^{(k)}=-tr(A^+_{k})$, $\theta^{(\ell)}=-tr(A^+_{\ell})$ respectively. The mean curvature vector is given by $H=tr(\Pi)=\theta^{(k)}\ell+\theta^{(\ell)}k$. Let $N$ be a compactly supported normal vector to $S$ and $(\phi^N_\epsilon)_{\epsilon\in I}$ the associated one parameter group of diffeomorphisms of $\Bm$. For each $\epsilon$, $S_\epsilon=\phi^N_\epsilon(S)$ is called a {\bfseries Lie dragging} of $S$ along $N$. Hence $S_0=S$ and $\phi^N_\epsilon$ can be viewed as a variation of the immersion $\phi^N_0:S\to\Bm$ of $S$ into $\Bm$ with the velocity vector of the variation $N=\frac{d\phi^N_\epsilon}{d\epsilon}_{|\epsilon=0}$. Let $|S_\epsilon|$ denote the area of $S_\epsilon$. From the first variation formula one gets
\begin{equation}\label{area}
\frac{d}{d\epsilon}|S_\epsilon|_{|\epsilon=0}=-\int_{S}\langle H,N\rangle\eta_g,
\end{equation} 
where $\eta_g$ is the metric form on $S$. The first order variation of the area of $S$ with respect to deformations along $N$ is $\delta_N|S|:=\frac{d}{d\epsilon}\left[|S_\epsilon|\right]_{|\epsilon=0}$. It follows that 
$$\delta_k|S|=\int_{S}\theta^{(k)}\eta_g\qquad\mbox{ and }\qquad\delta_\ell|S|=\int_{S}\theta^{(\ell)}\eta_g.$$ Hence when $\theta^{(\ell)}<0$, the area of $S$ decrease when $S$ is dragging along $\ell$; this is taken as a clear signal of the presence of a strong gravitational field which that tends to drag things and even light toward it \cite{hayward2000black}, and $S$ is called a {\bfseries weakly future trapped surface} \cite{mars2014stability}.  

Introduced by Penrose in \cite{PR}, the concept of \emph{trapped surface} plays an important role in general relativity. The surface $S$ is said to be a trapped surface if all light rays emitted from the surface locally converge. Nothing can escape, not even light. It is believed that there will be a marginally trapped surface, separating trapped surfaces from the untrapped ones, where the outgoing light rays are instantaneously parallel. It is prove that, $S$ is a trapped surface if and only if the two expansions are of the same sign, and marginally trapped if and only if (at least) one of the expansions vanishes, which is equivalent to say that the mean curvature vector field is lightlike or zero The following definitions can be found in \cite{Ja2,GJ,Ja2,alan2013black}.
\begin{definition}\label{margtrap}
	A codimension-two spacelike submanifold of a Lorentzian manifold is called a future
	\begin{itemize}
		\item {\bfseries Trapped Submanifold (TS)} if $\theta^{(\ell)}<0$ and $\theta^{(k)}<0$.
		\item {\bfseries Marginally Trapped Submanifold (MTS)} if $\theta^{(\ell)}=0$ and $\theta^{(k)}\leq0$.
		\item {\bfseries Trapped Outer Submanifold (TOS)} if $\theta^{(\ell)}<0$.
		\item {\bfseries Marginally Outer Trapped Submanifold (MOTS)} if its mean curvature vector is lightlike or zero.
	\end{itemize}
\end{definition}

The outgoing direction depends on the choice and when the expansion in one direction is zero one takes this direction as the outgoing direction. In other words, a MOTS is a codimension-two spacelike submanifold for which the expansion in a normal null direction vanishes.

\begin{definition}[\cite{hayward1994general}]
	\begin{itemize}
		\item The {\bfseries trapped region} of the Lorentzian manifold $\Bm$ is an inextensible region for which each point lies on some trapped submanifold $S\subset\Bm$. Its boundary is called a {\bfseries trapping boundary}.
		\item A {\bfseries trapping horizon} of $\Bm$ is (the closure of) a hypersurface $M$ foliated by closed MOTSs, called cross sections, for which $\theta^{(k)}\neq0$ and $\delta_{k}\theta^{(\ell)}\neq0$.
		\item A trapping horizon $M$ is said future (respectively, past) if for each MOTS leaf of $M$, there exists $k$ and $\ell$ (as above) such that  $\theta^{(k)}<0$ (respectively, $\theta^{(k)}>0$).
		\item A trapping horizon $M$ is said outer (respectively, inner) if for each MOTS leaf of $M$, there exists $k$ and $\ell$ (as above) such that  $\delta_{k}\theta^{(\ell)}<0$ (respectively, $\delta_{k}\theta^{(\ell)}>0$).
	\end{itemize}
\end{definition}

These notions of trapped surfaces are used to capture a practical idea of the back hole which can be simulated numerically. More precisely, one sees locally a black hole as a Future Outer Trapping Horizon (FOTH). In such a region of the space-time, the ingoing light rays are converging ($\theta^{(k)}<0$) and outgoing light rays are  instantaneously parallel on the horizon ($\theta^{(\ell)}=0$), and their converging inside the horizon ($\delta_{k}\theta^{(\ell)}<0$). A {\bfseries Marginally Outer Trapped Tube} (MOTT) is a hypersurface foliated by MOTSs \cite{andersson2007stability}. When the MOTSs foliating a MOTT are closed and the inward null expansion $\theta^{(k)}$ and the variation of the outward null expansion $\theta^{(\ell)}$ along the inward direction do not vanish, the MOTT becomes a trapping horizon. If in addition, we are in a $4-$dimensional space-time where Einstein's equation and dominant energy condition hold, then we obtain an NEH.
%Recall that for any covariant tensor $\Gamma$ on $S$, the first order variation of $\Gamma$ along a vector field $N$ is defined by $\delta_N(\Gamma)=\frac{d}{d\epsilon}\left((\phi^N_\epsilon)^\star(\Gamma_\epsilon)\right)|_{\epsilon=0}$, where $\Gamma_\epsilon$ is the analogous of $\Gamma$ on $S_\epsilon=\phi^N_\epsilon(S)$.
%\begin{definition}
%A {\bfseries quasi-local black hole horizon} is a Future Outer Trapping Horizon (FOTH).
%\end{definition}

\subsection{Null Trapping Horizon}

Let $x:(M,g,N)\to(\Bm,\bg)$ be a rigged null hypersurface of a spacetime $(\Bm,\bg)$, with $N$ future-directed. (Recall that since $\Bm$ is time-orientable, any null hypersurface $M\to\Bm$, has a null rigging vector field.) Let's assume that the rigging $N$ is with an integrable screen distribution $S(TM)=ker(x^\star\langle N,\cdot\rangle)$ and we denote by $S$ a generic leaf. It is a well known fact that $S$ is a codimension-two spacelike submanifold of  $(\Bm,\bg)$ and the normal bundle $TS^\perp$ of the immersion $S\to\Bm$ is spanned by $N$ and $\xi$. We set $k=N$ and $\ell=-\xi$ to be the future-directed null normals spanning the normal bundle of $S$, with $\ell$ in the outgoing direction. Using equations (\ref{geq1}) to (\ref{geq4}), we derive Gauss and Weingarten formulas of the immersion   $S\to\Bm$ as 
\begin{align}
\bnab_XY&=\snab_XY+B(X,Y)N+C(X,Y)\xi=\snab_XY+h(X,Y),\label{geq7}\\
\bnab_XN&=-A_NX+\tau(X)N=-A_NX+\snab^\perp_XN, \label{geq8}\\
\nabla_X\xi&=-\sn X-\tau(X)\xi=-\sn X+\snab^\perp_X\xi, \label{geq9}
\end{align}
for any $X,Y\in\Gamma(S(TM))$, where $h(X,Y)$ is the second fundamental form, and $\snab^\perp$ is the normal connection. Hence the screen connection $\snab$ is the Levi-Civita connexion of $S$, and the     shape operators of the null hypersurface $M$ and the ones of the immersion $S\to\Bm$ (defined by (\ref{geq2s}) and (\ref{geq3s})) are related on $S$ by 
$$A_N=A^+_{k}~~\mbox{ and }~~\sn =-A^+_{\ell}.$$
Hence, expansions of $S$ and mean curvatures of $M$ are related by
\begin{equation}\label{expansion}
\begin{matrix}
\theta^{(k)}&=&-tr(A^+_{k})=-tr(A_N)=-S_1\\
\theta^{\ell}&=&-tr(A^+_{\ell})=tr(\sn)=\sS_1
\end{matrix}.
\end{equation} 
Thus the mean curvature of the lightlike hypersurface is the expansion of leaves of the screen distribution in the outgoing direction. Hence, a leaf $S$ is a trapped submanifold if and only if $\stackrel{\star}{S}_1<0$ and $S_1>0$; $S$ is a marginally trapped submanifold if and only if $\stackrel{\star}{S}_1=0$ and $S_1\geq0$; $S$ is a trapped outer submanifold if and only if $\stackrel{\star}{S}_1<0$; $S$ is a marginally outer trapped submanifold if and only if $\stackrel{\star}{S}_1=0$; and $M$ is a null FOTH if and only if $\sS_1=0$, $S_1>0$ and $\delta_N\!\sS_1<0$. The mean curvature vector is given by
\begin{equation}\label{meancurv}
H=-tr(h)=-S_1\xi-\stackrel\star S_1N=-\theta^{(k)}\ell-\theta^{(\ell)}k.
\end{equation}

\begin{example}
	We consider the $6-$dimensional spacetime $(\Bm=\R^6,\bg)$ endowed with
	the metric
	$$\bar g=-(dx^0)^2+(dx^1)^2+\exp{2x^0}[(dx^2)^2+(dx^3)^2]+\exp{2x^1}[(dx^4)^2+(dx^5)^2],$$
	$(x^0,...,x^5)$ being the usual Cartesian coordinates on $\R^6$.
	The hypersurface $M$ of $\Bm$ defined by
	$$M=\{(x^0,...,x^5)\in\R^6\,;\;x^0+x^1=0\}$$
	is a lightlike hypersurface of $(\bar M, \bar g)$ and
	the vector field $N=-\frac{1}{2}\left(\frac{\partial}{\partial
		x^0}+\frac{\partial}{\partial x^1}\right)$ is a null rigging for
	$M$ with corresponding rigged vector field\\ $\xi=\frac{\partial}{\partial x^0}-
	\frac{\partial}{\partial x^1}$ and integrable screen distribution $
	S(TM)=span\{\stackrel{\star}{E}_1,
	\stackrel{\star}{E}_2,\stackrel{\star}{E}_3,\stackrel{\star}{E}_4\}$
	with
	$$\stackrel{\star}E_1=e^{-2x^0}\frac{\partial}{\partial x^2},~\stackrel{\star}E_2=
	e^{-2x^0}\frac{\partial}{\partial
		x^3},~\stackrel{\star}E_3=e^{-2x^1} \frac{\partial}{\partial
		x^4},~\stackrel{\star}E_4=e^{-2x^1}\frac{\partial}{\partial
		x^5}.$$ By direct computations, one sees that $M$ is not totally geodesic but is minimal (hence $\Bm$ doesn't have constant sectional curvature) and $S_N=tr(A_N)=2>0$. Hence leaves of the screen distribution given by $L=\{x^0=cste,x^1=cste\}$ are marginally trapped submanifolds of $(\Bm,\bg)$.
\end{example}

\begin{example}
	Let us consider the Schwarzschild spacetime, whose metric $\bg$ is given in the Schwarzschild coordinates $(t_s,r,\theta,\varphi)$ by
	\begin{equation}\label{schwarzschild}
	\bg=-\left(1-\frac{2m}{r}\right)dt_s^2+\left(1-\frac{2m}{r}\right)^{-1}dr^2+r^2(d\theta^2+\sin^2\theta d\varphi^2).
	\end{equation}
	This is the first non-trivial solution of Einstein's equations, found by the astrophysicist Karl Schwarzschild in the end of 1915. It is the metric outside a spherical body of mass $m$ and radius $r=2m$.  The singularity presented by this metric on $r=2m$ is an apparent singularity due to the bad choice of coordinate systems. Let us set
	$$t=t_s+2m\ln\left|\frac{r}{2m}-1\right|.$$
	Then, $(t,r,\theta,\varphi)$ is a new coordinate systems called ingoing Eddington-Finkelstein coordinates, and in which metric (\ref{schwarzschild}) becomes
	\begin{equation*}%\label{schwarzschild2}
	\bg=-\left(1-\frac{2m}{r}\right)dt^2+\frac{4m}{r}dtdr+\left(1+\frac{2m}{r}\right)dr^2+r^2(d\theta^2+\sin^2\theta d\varphi^2).
	\end{equation*}
	It is nothing to see that the hypersurface
	$$M:~r=2m,$$ is a  null hypersurface, called the event horizon. One can rig this lightlike hypersurface by the following future directed null rigging and associated rigged :
	$$ N=\frac{r}{2m}\partial_t-\frac{r}{2m}\partial_r,~~\xi=-\partial_t.$$ Corresponding screen distribution is given by $S(TM)=span(\sE_1,\sE_2)$ with 
	$$\sE_1=\partial_\theta,~~\sE_2=\partial_\varphi.$$ This screen is integrable and leaves are spheres $\{t=cste, r=2m\}$, which foliate $M$. A direct computation gives
	$$\sn=0,~~A_N=\frac{1}{2m}P,~~\tau|_{S(TM)}=0,~~\tau(\xi)=\frac{1}{4m}.$$
	Hence, $M$ is totally geodesic, the screen distribution is totally umbilical with $\lambda=1/2m$. It follows that 
	$$\sS_1=0,~~S_1=1/m\geq0.$$ Then, spheres $t=cste,r=2m$ are marginally trapped surfaces. Hence, $M$ is a null future trapping horizon. Also, $M$ is a non-expanding horizon and spheres $\{t=cste, r=2m\}$ are cross sections.
	
	Moreover, the one parameter group of diffeomorphisms (just the flow) of $N$ starting at $(t_0,r_0,\theta_0,\varphi_0)$ is given by
	$$\phi_\epsilon=(t_0+(1-\exp(-\epsilon/2m))r_0,r_0\exp(-\epsilon/2m),\theta_0,\varphi_0).$$
	The image of a sphere $\mathcal{S}=\{t=t_0,r=2m\}$ by $\phi_\epsilon$ (Lie dragging of $\mathcal{S}$ along $N$) is
	$$\mathcal{S}_\epsilon:=\phi_\epsilon(\mathcal{S})=\{t=t_0+2m(1-\exp(-\epsilon/2m)),r=2m\exp(-\epsilon/2m)\}.$$
	These spheres are spacelike surfaces and corresponding normalized pairs are given by
	$$N_\epsilon=\frac{r\exp(-\epsilon/2m)}{2m}(1,-1,0,0),$$ $$\xi_\epsilon=\left(-\frac{m\exp(\epsilon/2m)}{x}-\frac{2m^2\exp(\epsilon/m)}{x^2},-\frac{m\exp(\epsilon/2m)}{x}+\frac{2m^2\exp(\epsilon/m)}{x^2},0,0\right).$$
	By a direct calculation, one finds
	$$\sS_{\epsilon1}=\frac{2m\exp(\epsilon/2m)(-r+2m\exp(\epsilon/2m))}{r^3}.$$
	Hence,
	$$\delta_N\!\sS_1=\left.\frac{d\sS_{\epsilon1}}{d\epsilon}\right|_{\epsilon=0}=\frac{r-4m}{r^3}<0.$$
	$M$ is then a null FOTH (a quasi-local black hole horizon).
\end{example}

Let $\overline{R}, R, \stackrel{\star}{R}$ be the Riemannian curvatures of $\bnab$, $\nabla$ and $\snab$ respectively. It is a straightforward computation to check that the Codazzi and Ricci equations of the immersion $S\to\Bm$ are given by
\begin{align}
\langle\overline{R}(X,Y)Z,T\rangle&=\langle\stackrel{\star}{R}(X,Y)Z,T\rangle\nonumber\\
&+ \langle A_{h(X,Z)}Y,T\rangle-\langle A_{h(Y,Z)}X,T\rangle,\label{geqs1}\\
\langle\overline{R}(X,Y)Z,\mathsf{\delta_1}\rangle&=\langle\left(\bnab_Xh\right)(Y,Z),\delta_1 \rangle-\langle\left(\bnab_Yh\right)(X,Z),\delta_1\rangle,\label{geqs2}\\
\langle\overline{R}(X,Y)\delta_1,\delta_2\rangle&=\langle R^{\perp}(X,Y)\delta_1 ,\delta_2\rangle-\langle[A_{\delta_1},A_{\delta_2}]X,Y\rangle,\label{geqs3}
\end{align}
for all $X,Y,Z,T\in\Gamma(S(TM))$ and for any $\delta_1,\delta_2\in\Gamma(TS^\perp)$. Here, $\bnab h$ is defined by
$$(\bnab_Xh)(Y,Z)=\snab^\perp_X\!\!h(Y,Z)-h(\snab_X\!\!Y,Z)-h(Y,\snab_X\!\!Z).$$
It is straightforward to show that
\begin{equation}\label{nablah}
\bnab_Xh=\left(\nabla_XB+\tau(X)B\right)N+\left(\nabla_XC-\tau(X)C\right)\xi.
\end{equation}

With the objects of the null hypersurface $M$, the Codazzi and Ricci equations (\ref{geqs1})-(\ref{geqs3}) give
\begin{align}
\langle\overline{R}(X,Y)Z,T\rangle&=\langle R(X,Y)Z,T\rangle\nonumber\\\label{geqs4}&+B(X,Z)C(Y,T)-B(Y,Z)C(X,T)\\
&=\langle \stackrel{\star}{R}(X,Y)Z,T\rangle\nonumber\\\nonumber&+B(X,Z)C(Y,T)-B(Y,Z)C(X,T)\\\nonumber&+C(X,Z)B(Y,T)-C(Y,Z)B(X,T)\\
\langle\overline{R}(X,Y)Z,N\rangle&=\left(\nabla_XC\right)(Y,Z)-\left(\nabla_YC\right)(X,Z)\nonumber\\&+C(X,Z)\tau(Y)-C(Y,Z)\tau(X),\label{geqs5}\\
\langle\overline{R}(X,Y)Z,\xi\rangle&=\left(\nabla_XB\right)(Y,Z)-\left(\nabla_YB\right)(X,Z)\nonumber\\&+B(Y,Z)\tau(X)-B(X,Z)\tau(Y).\label{geqs6}\\
\langle\overline{R}(X,Y)\xi,N\rangle&=C(\sn\!\!X,Y)-C(X,\sn\!\!Y)-d\tau(X,Y).\label{geqs8}
\end{align}

We say that $S$ has a parallel mean curvature vector when $\snab^\perp\!\!\!H=0$. $S$ is called a parallel submanifold when $\bnab h=0$. One can check that parallel submanifolds have parallel mean curvature vector. 

\begin{proposition}\label{prominimal}
	Let $(M,g,N)$ be a closed rigged null hypersurface of a Lorentzian manifold $(\bg,\Bm)$, and $L$ a generic leaf of the screen distribution. Then,
	\begin{dingautolist}{192}
		\item $M$ is a null MOTT if and only if $M$ is minimal ;
		\item $L$ has parallel mean curvature vector if and only if expansions satisfy $d\theta^{(k)}=\theta^{(k)}\tau$ and $d\theta^{(\ell)}=-\theta^{(\ell)}\tau$ on $L$.
	\end{dingautolist}
\end{proposition}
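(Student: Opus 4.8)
The plan is to treat the two items separately; each is short once the dictionary (\ref{expansion}) between the expansions of a leaf $S$ and the mean curvatures of $M$ is in hand.

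For item \ding{192} I would merely unwind the definitions. Since $(M,g,N)$ is closed rigged, its associated screen distribution $S(TM)$ is integrable and its leaves foliate $M$. By definition $M$ is a null trapping horizon exactly when each leaf $S$ is a MOTS, and, as explained after (\ref{expansion}), a leaf $S$ is a MOTS if and only if its outgoing expansion $\theta_{\xi^+}=\sS_1$ vanishes on $S$. Because the leaves cover $M$, the vanishing of $\sS_1$ on every leaf is the same as $\sS_1\equiv 0$ on all of $M$, i.e.\ as $M$ being minimal in the sense of Definition \ref{minimal}. So this item carries no real difficulty beyond recalling (\ref{expansion}).

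For item \ding{193} I would start from the normal-connection data already recorded in (\ref{geq8})--(\ref{geq9}), namely $\snab^\perp_X N=\tau(X)N$ and $\snab^\perp_X\xi=-\tau(X)\xi$ for $X\in\Gamma(S(TM))$, together with the expression $H=-S_1\xi-\sS_1 N$ of (\ref{meancurv}). Applying the Leibniz rule for $\snab^\perp$ then gives, for every $X\in\Gamma(S(TM))$,
\begin{equation*}
\snab^\perp_X H=\bigl(S_1\tau(X)-X(S_1)\bigr)\xi-\bigl(X(\sS_1)+\sS_1\tau(X)\bigr)N .
\end{equation*}
Since $\xi$ and $N$ are pointwise linearly independent and frame the normal bundle $TS^{\perp}$, the condition $\snab^\perp H=0$ splits into the two scalar identities $dS_1=S_1\tau$ and $d\sS_1=-\sS_1\tau$ on $S$. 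Substituting $S_1=-\theta^+_{N^+}$ and $\sS_1=\theta_{\xi^+}$ from (\ref{expansion}) rewrites them as $d\theta^+_{N^+}=\theta^+_{N^+}\tau$ and $d\theta_{\xi^+}=-\theta_{\xi^+}\tau$ on $S$, which is the claim.

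The computations here are routine; the only points demanding care are the sign bookkeeping induced by the conventions $N^+=N$, $\xi^+=-\xi$, $\theta^+_{N^+}=-S_1$, $\theta_{\xi^+}=\sS_1$, and the (harmless) remark that comparing $\xi$- and $N$-coefficients is legitimate precisely because these two fields span the rank-two normal bundle of $S$.
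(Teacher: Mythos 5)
Your treatment of item \ding{193} is correct and coincides with the paper's: the paper likewise differentiates (\ref{meancurv}) using the normal-connection data in (\ref{geq8})--(\ref{geq9}) and reads off the two scalar conditions, and your explicit computation and sign bookkeeping agree with its formula for $\snab^\perp\!H$. The forward half of item \ding{192} (minimal $\Rightarrow$ trapping horizon) is also the paper's argument: closedness of the rigging makes the screen integrable, and by (\ref{expansion}) its leaves have vanishing outgoing expansion, hence are MOTSs.

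The gap is in the converse of item \ding{192}. You assert that ``by definition $M$ is a null trapping horizon exactly when each leaf $S$ of the screen distribution of $N$ is a MOTS''. That is not the definition used in the paper: a trapping horizon only requires that $M$ admit \emph{some} foliation by MOTSs, and a priori this foliation need not be the one induced by $S(TM)=\ker\eta$. So from ``$M$ is a trapping horizon'' you cannot directly conclude $\sS_1=\theta_{\xi^+}=0$ for the given normalization; what your argument establishes is the weaker equivalence ``the screen leaves of $N$ are MOTSs iff $M$ is minimal''. The paper closes this point as follows: the MOTS leaves are spacelike codimension-two submanifolds lying in $M$, so the tangent distribution of the given foliation is itself a screen distribution; Theorem \ref{db} provides an associated normalizing pair $(N',\xi')$, and (\ref{expansion}) for that pair gives $tr(\stackrel{\star}{A}_{\xi'})=0$. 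To conclude minimality with respect to your original rigging one further (implicit) observation is needed: writing $N'=\phi N+\zeta$ with $\zeta\in\Gamma(TM)$ and $\phi$ nowhere zero, item \ding{198} of Lemma \ref{change} together with $B(\zeta,\xi)=0$ from (\ref{deg}) yields $tr(\stackrel{\star}{A}_{\xi'})=\frac1\phi\,tr(\sn)$, so vanishing of the null mean curvature is independent of the choice of screen/rigging, and $M$ is minimal in the sense of Definition \ref{minimal} for the given $N$ as well. Adding this change-of-screen step would repair your proof.
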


\begin{proof}
	Taking normal derivative in (\ref{meancurv}), one obtains 
	\begin{equation}\label{normalderivaiveH}
	\snab^\perp\!\!\!\!H = (d\theta^{(k)}-\theta^{(k)}\tau)\xi-(d\theta^{(\ell)}+\theta^{(\ell)}\tau)N
	\end{equation}
	and the second item follows.
	If $M$ is minimal then by Definition \ref{minimal} and (\ref{expansion}), the outgoing expansion of $L$ identically vanishes and then, $M$ is a null MOTT. Conversely, if $M$ is a null MOTT then, $M$ is foliated by MOTSs. Since This MOTSs are spacelike, the distribution associated to this foliation is a screen distribution for $M$. Let $(N,\xi)$ be a normalizing pair associated with this screen distribution as given into Theorem \ref{db}. The second equality in (\ref{expansion}) give $tr(\sn)=\theta^{(\ell)}=0$, which by Definition \ref{minimal} leads to: $M$ is minimal.
\end{proof}

%\begin{theorem}
%Let $(\Bm,\bg)$ be an Einstein's space-time where null energy condition and dominant energy condition hold. Any MOTT foliated by closed MOTS is a trapping horizon.
%\end{theorem}

\begin{theorem}
	In a spacetime with constant sectional curvature $c$, cross-sections of a MOTT are Riemannian manifold with the same constant sectional curvature $c$.
\end{theorem}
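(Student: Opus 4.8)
The plan is to reduce the Gauss equation of a cross-section to the form characterizing constant sectional curvature. First I would note that a NEH is in particular a null trapping horizon, so Proposition \ref{prominimal} applies and yields that $M$ is minimal, i.e. $\sS_1=tr(\sn)=0$; throughout, the screen distribution is taken to be the one tangent to the MOTS foliation, so that its leaves are exactly the cross-sections $S$. Since the ambient space is the semi-Riemannian space form $\Bm(c)$, Theorem \ref{geomini} then upgrades minimality to total geodesibility: $\sn\equiv 0$ on $M$, equivalently $B\equiv 0$. (Total geodesibility is a rigging-independent property by Lemma \ref{change}, so this conclusion is insensitive to the normalization used.)

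Now I would feed $B\equiv 0$ into the Gauss equation (\ref{geqs4}) of the immersion $S\to\Bm$. Every term on the right-hand side of (\ref{geqs4}) contains a factor $B$, so the equation collapses to
\[
\langle\overline{R}(X,Y)Z,T\rangle=\langle\stackrel{\star}{R}(X,Y)Z,T\rangle
\]
for all $X,Y,Z,T\in\Gamma(S(TM))=\Gamma(TS)$, where $\stackrel{\star}{R}$ is the curvature of the Levi-Civita connection $\snab$ of the leaf $S$ equipped with its induced (positive definite, since $S$ is spacelike) metric. Because $\Bm$ has constant sectional curvature $c$, the left-hand side equals $c\big(\langle Y,Z\rangle\langle X,T\rangle-\langle X,Z\rangle\langle Y,T\rangle\big)$; hence the induced metric on $S$ has constant sectional curvature $c$, that is, each cross-section $S$ is a space form of sectional curvature $c$.

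The proof is short because the substance is borrowed from the earlier results; the only delicate step is ``minimal $\Rightarrow$ totally geodesic'', which is precisely Theorem \ref{geomini} and rests on the null Raychaudhuri equation (\ref{ric}): in a space form $\overline{Ric}(\xi,\xi)=0$, so $\xi\cdot\sS_1+\tau(\xi)\sS_1=tr(\sn^2)$, and since $\sn$ is diagonalizable with vanishing trace $\sS_1$, the right-hand side is a sum of squares of the eigenfunctions, which forces all of them to vanish. A secondary point to check is that the rigged pair $(N,\xi)$ used in the structure equations (\ref{geqs4}) is the one adapted to the NEH foliation; this causes no trouble, precisely because the vanishing of $B$ does not depend on that choice.
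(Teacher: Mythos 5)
Your proof is correct and follows essentially the same route as the paper: the paper's own argument also reduces to ``NEH $\Rightarrow$ minimal $\Rightarrow$ totally geodesic via Theorem \ref{geomini} (null Raychaudhuri with $\overline{Ric}(\xi,\xi)=0$), then the Gauss equation (\ref{geqs4}) collapses to $\langle\overline{R}(X,Y)Z,T\rangle=\langle\stackrel{\star}{R}(X,Y)Z,T\rangle$,'' merely stating the totally geodesic step in one line. You simply make explicit the intermediate appeals to Proposition \ref{prominimal} and the rigging-independence of $B$, which the paper leaves implicit.
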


\begin{proof}
	With Proposition \ref{prominimal}, such a MOTT $M$ is minimal and since the sectional curvature of the ambient is constant, by Theorem \ref{geomini}, $M$ is totally geodesic. So, equation (\ref{geqs4}) leads to
	\begin{equation}
	\langle\overline{R}(X,Y)Z,T\rangle=\langle \stackrel{\star}{R}(X,Y)Z,T\rangle
	\end{equation}
	which completes the proof.
\end{proof}

From the above theorem, one deduces following corollaries. The first one is justified by the fact that a Riemannian manifold with non-positive sectional curvature cannot be isometric to the sphere $\mathbb{S}^2$ while the second one holds as any compact Riemannian manifold is geodesically complete and then is a space form provided the sectional curvature is positive.

\begin{corollary}
	A space-time with constant non-positive sectional curvature cannot contain a null non-expanding horizon.
\end{corollary}

\begin{corollary}
	In an Einstein's space-time with positive constant sectional curvature and where dominant energy condition holds, any null trapping horizon is a null non-expanding horizon.
\end{corollary}

\subsection{SIC-normalized null hypersurfaces}

Let $(M,g,N)\to(\Bm,\bg)$ be a rigged null hypersurface of a $(n+2)-$dimensional spacetime. We say that the screen distribution is conformal when the two shape operators are conformal. This means that there exists a function $\varphi$ such that $A_N=\varphi\sn$.  

\begin{definition}
	A Screen Integrable and Conformal (SIC) rigging is one for which the screen distribution is integrable and conformal.
\end{definition}

\begin{example}
	The rigging $N=k$ corresponding to the screen distribution coming from the MOTSs foliation in a MOTT or a trapping horizon is a SIC-rigging. 
\end{example}

When the conformal factor $\varphi$ is $1$ and the rigging is closed, $N$ is called a UCC (Unitary Conformally Closed) rigging. Hence, UCC rigging defined in \cite{AFN}, is a particular case of SIC rigging. Since the screen distribution $S(TM)=ker(\eta)$ then from the two last items of Lemma \ref{change}, it follows that if $N$ is a SIC-rigging then any change of rigging $\widetilde N=\phi N$ is also a SIC-rigging. 

\begin{lemma}\label{cc}
	Let $x:(M,g,N)\to(\Bm,\langle\cdot,\cdot\rangle)$ be a rigged null hypersurface.
	\begin{dingautolist}{192}
		\item If $N$ is a closed rigging with conformal screen distribution then the rotation $1-$form $\tau^N$ vanishes on the screen distribution. 
		\item If $N$ is a rigging with conformal screen distribution and vanishing rotation $1-$form then, the $1-$form $\eta=x^\star\langle N,\cdot\rangle$ is closed.
	\end{dingautolist}
	%Moreover if $\Bm$ is a space form then, the rotation $1-$form identically vanishes.
\end{lemma}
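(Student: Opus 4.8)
The plan is to derive a single closed formula for $d\eta$ in terms of the shape operator $A_N$ and the rotation $1$-form $\tau^N$, from which both assertions will follow immediately.

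First I would compute the covariant derivative of $\eta$ with respect to the induced connection $\nabla$. Since $\nabla$ is torsion-free, for $U,V\in\Gamma(TM)$ one has $d\eta(U,V)=(\nabla_U\eta)(V)-(\nabla_V\eta)(U)$, so it suffices to evaluate $(\nabla_U\eta)(V)=U\cdot\eta(V)-\eta(\nabla_UV)$. Writing $\eta(V)=\langle N,V\rangle$, differentiating with $\bnab$, inserting the Gauss equation (\ref{geq1}) and the Weingarten equation (\ref{geq3}) and using $\langle N,N\rangle=0$, one is led to
\begin{equation*}
(\nabla_U\eta)(V)=-g(A_NU,V)+\tau^N(U)\eta(V),
\end{equation*}
and hence, after antisymmetrizing,
\begin{equation*}
d\eta(U,V)=g(A_NV,U)-g(A_NU,V)+\tau^N(U)\eta(V)-\tau^N(V)\eta(U).
\end{equation*}

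Next I would bring in the hypothesis that the screen is conformal, $A_N=\varphi\sn$. By the relations in (\ref{autoadjoint}) one has $g(\sn U,V)=B(U,V)$, and $B$ is symmetric; therefore $g(A_NV,U)=\varphi B(V,U)=\varphi B(U,V)=g(A_NU,V)$ and the first two terms above cancel, leaving
\begin{equation*}
d\eta(U,V)=\tau^N(U)\eta(V)-\tau^N(V)\eta(U)\qquad\text{for all }U,V\in\Gamma(TM).
\end{equation*}
From this identity the proof finishes in two lines. For the first assertion: if the rigging is closed, then $d\eta=0$, so taking $V=\xi$ (recall $\eta(\xi)=1$) gives $\tau^N(U)=\tau^N(\xi)\eta(U)$ for every $U$; in particular $\tau^N$ vanishes on $S(TM)=\ker\eta$. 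For the second: if $\tau^N\equiv0$, the right-hand side of the identity is identically zero, so $d\eta=0$, i.e. $\eta$ is closed.

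I do not foresee a serious obstacle. The whole statement reduces to the identity $d\eta(U,V)=\tau^N(U)\eta(V)-\tau^N(V)\eta(U)$, which holds as soon as the screen is conformal; the only point needing a bit of care is keeping the signs straight in the formula for $(\nabla_U\eta)(V)$, and observing that conformality forces $A_N$ to be $g$-self-adjoint on all of $TM$ — not merely on the leaves of the screen — which is exactly what makes the two $A_N$-terms in $d\eta$ disappear.
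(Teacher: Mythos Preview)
Your proof is correct and follows essentially the same route as the paper. The paper computes $\langle\bnab_U N,V\rangle$ directly from the Weingarten formula to obtain the identity $-\langle A_NU,V\rangle+\tau^N(U)\eta(V)=-\langle A_NV,U\rangle+\tau^N(V)\eta(U)$ (under closedness) and then sets $V=\xi$, whereas you first package the same computation as a formula for $d\eta$ valid under conformality and read off both items from it; the underlying identity and the use of $A_N=\varphi\sn$ to make $A_N$ $g$-symmetric are the same in both arguments.
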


\begin{proof}
	Assume $\eta$ is closed and let
	$U$, $V$ be tangent vector fields to $M$. The condition $d\eta(U,V)=U\cdot
	\eta(V) - V\cdot\eta(U) -\eta([U,V]) = 0$ is equivalent to
	$\Big\langle\overline\nabla_{U}N,V  \Big\rangle =
	\Big\langle\overline\nabla_{V}N,U  \Big\rangle$. Then by the weingarten
	formula, we get
	\begin{equation}\label{eq}
	\Big\langle -A_{N}U,V  \Big\rangle + \tau^{N}(U)\eta(V) = \Big\langle -A_{N}V,U  \Big\rangle + \tau^{N}(V)\eta(U).
	\end{equation}
	In this relation, take $V = \xi$ to get
	$$\tau^{N}(U) = -\Big\langle A_{N}\xi,U  \Big\rangle + \tau^{N}(\xi)\eta(U)$$
	% which gives the desired formula as $\tau^{N}(\xi) = 0$.
	From here if $N$ is with conformal screen then $A_N\xi=\varphi\!\!\sn\!\!\xi=0$ and the first item is proved. If $\tau^N$ identically vanishes and $N$ is with conformal screen distribution then $A_N$ is symmetric and equation (\ref{eq}) holds, that is equivalent to say that $\eta$ is closed.
\end{proof}

A consequence of the second item in the above Lemma is that rigging with conformal (resp. unitary conformally) screen distribution and vanishing rotation $1-$form is a SIC-rigging (resp. UCC-rigging). The following is to show that there exist null hypersurfaces with SIC-rigging. 

Let $M$ be the Monge hypersurface of the Lorentz-Minkowski space $\R^{n+2}_1$ given by 
$$M=\{x=(x^0=F(x^1,\ldots,x^{n+1}),x^1,\ldots,x^{n+1})\in\R^{n+2}\},$$ where  $F:D\rightarrow\R$ is a smooth function defined on some open subset  $D$ of $\R^{n+1}$. 
For
a vector field $\displaystyle  X = X^A\frac{\partial}{\partial
	x^A}\in\R^{n+2}_{1}$ a necessary and sufficient condition  to be
tangent to $M$  is that $X^0=X^1F'_{x^1}+\cdots+X^1F'_{x^{n+1}}$.
Then $\displaystyle \delta=\dfrac{\partial}{\partial
	x^0}+\displaystyle\sum_{a=1}^{n+1}F'_{x^a}\frac{\partial}{\partial
	x^a}$ is normal to $M$. The later  is a null hypersurface if and
only if $\delta$ is a  null vector field. This is equivalent to
\begin{equation}\label{lightlike}
\sum_{a=1}^{n+1}\left(F'_{x^a}\right)^2=||\nabla F||^{2}=1,
\end{equation}
where $\nabla F$ is the gradient of $F$ with respect to the
Euclidean structure  $||\cdot||$  of $\R^{n+1}$.

Let us assume that $M$ is a Monge null hypersurface and consider the null rigging
\begin{equation}\label{mongenormalization}
\mathscr{N}_{F}=\frac{1}{\sqrt2}\Big[-\frac{\partial}{\partial
	x^0}+\sum_{a=1}^{n+1}F'_{x^a}\frac{\partial}{\partial
	x^a}\Big]=\frac1{\sqrt2}(-1,\nabla F),
\end{equation} 
with corresponding rigged vector field
\begin{equation}\label{rigging3}
\xi_{F}=\frac{1}{\sqrt2}\Big[\frac{\partial}{\partial
	x^0}+\frac1{\sqrt2}\sum_{a=1}^{n+1}F'_{x^a}\frac{\partial}{\partial
	x^a}\Big]=\frac{1}{\sqrt2}(1,\nabla F).
\end{equation} 
Let us consider  the natural (global) parametrization of $M$ given by
\begin{equation}
\begin{cases} x^0=F(u^1,...,u^{n+1}) &\\
x^a=u^a \qquad\qquad\qquad\qquad & (u^1,...,u^{n+1})\in D\\
a=1,...,n+1
\end{cases}.
\end{equation}
Then $\Gamma(TM)$ is spanned by $\{\frac{\partial}{\partial
	u^a}\}_a$ with
\begin{equation}
\frac{\partial}{\partial u^a}=F'_{u^a}\frac{\partial}{\partial
	x^0}+\frac{\partial}{\partial x^a}.
\end{equation}

Then, taking
partial derivative of (\ref{lightlike}) with respect to $x^b$
($1\leq b\leq n+1)$ leads to
\begin{equation}\label{lightlike1}
\sum_{a=1}^{n+1}F'_{x^a}F''_{x^ax^b}=0.
\end{equation}
Now take a covariant derivative by the flat connection $\overline\nabla$ and
using (\ref{lightlike1}),
\begin{eqnarray*}
	\overline\nabla_\frac{\partial}{\partial
		u^a}\xi_F&=&\frac1{\sqrt2}\sum_{b=1}^{n+1}F''_{u^au^b}\frac{\partial}{\partial
		x^b}\\
	&=&\frac1{\sqrt2}\sum_{b=1}^{n+1}\left(-F''_{u^au^b}F'_{u^b}\frac{\partial}{\partial
		x^0}+F''_{u^au^b}\frac{\partial}{\partial
		x^b}\right)\\
	\overline\nabla_\frac{\partial}{\partial
		u^a}\xi_F&=&\frac1{\sqrt2}\sum_{b=1}^{n+1}F''_{u^au^b}\frac{\partial}{\partial{u^b}}=\overline\nabla_\frac{\partial}{\partial
		u^a}\mathscr{N}_F,
\end{eqnarray*}
which belong to $\Gamma(S(TM))$ (one proves this by using (\ref{lightlike1})) and shows that the rotation $1-$form identically vanishes and
\begin{equation*}
A_{\mathscr{N}_{F}}\left(\frac{\partial}{\partial
	u^a}\right)=\stackrel\star A_{\xi_F}\left(\frac{\partial}{\partial
	u^a}\right)=-\frac1{\sqrt2}\sum_{b=1}^{n+1}F''_{u^au^b}\frac{\partial}{\partial
	u^b}.
\end{equation*}
Thus, the screen distribution is conformal with  conformal
factor $\phi=1$. Hence by Lemma \ref{cc}, $\mathscr{N}_{F}$ is a UCC-rigging. $\mathscr{N}_{F}$ is the generic UCC-rigging of the Monge null hypersurface $M$. This proves the existence of so many null hypersurfaces of $\R^{n+2}_1$ equipped with a UCC-rigging. Rescaling  $\mathscr{N}_{F}$ by a nowhere zero smooth function $\phi$ to have $N=\phi\mathscr{N}_{F}$, it is easy to check that when $\phi$ is not constant on the screen distribution, $N$ is a SIC-rigging which is not a UCC-rigging.
The matrix of $\stackrel\star A_{\xi_F}$ with respect to the
basis $\{\frac{\partial}{\partial u^a}\}_a$ is given by
\begin{equation}\label{axi}
\stackrel\star A_{\xi_F}=-\frac1{\sqrt2}
\begin{pmatrix}
F''_{u^1u^1} & \cdots & F''_{u^{1}u^{n+1}}\\ \vdots & \ddots &
\vdots \\F''_{u^{n+1}u^1} & \cdots & F''_{u^{n+1}u^{n+1}}
\end{pmatrix}=-\frac1{\sqrt2}Hess(F)
\end{equation}
and by using (\ref{expansion}), it follows that expansions are given by,
\begin{equation}\label{s1}
-\theta_{N^+}=\theta_{\xi^+}=tr(\stackrel\star A_{\xi_F})=-\frac1{\sqrt2}\displaystyle\sum_{b=1}^{n+1}F''_{u^bu^b}=-\frac1{\sqrt2}\Delta
F,
\end{equation}
Which shows that expansions are of different signs. This is a general fact for all null hypersurfaces in the Lorentz-Minkowski space $\R^{n+2}_1$. Hence in Lorentz-Minkowski space, a null hypersurface cannot be foliated by trapped submanifolds. From the above equation, one derives the following result.

\begin{proposition}\label{promots}
	Let $x:(M,g,\mathscr{N}_F)\to\R^{n+2}_1$ be a  Monge null
	hypersurface endowed with its generic UCC-rigging $\mathscr{N}_F$. Then, the screen distribution is integrable with leaves the level sets of the function $F$, and a leaf $S$ is a Marginally (Outer) Trapped Submanifold if and only if $F$ is harmonic on $S$.
\end{proposition}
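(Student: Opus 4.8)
The plan is to extract everything from the explicit form of the generic UCC-rigging $\mathscr{N}_F$ and the Hessian formula (\ref{axi}) already established, and then read the trapping conditions off Definition \ref{margtrap}.

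First I would pin down the screen leaves. Working in the global parametrization $(u^1,\dots,u^{n+1})\mapsto(F(u),u^1,\dots,u^{n+1})$, with $\frac{\partial}{\partial u^a}=F'_{u^a}\frac{\partial}{\partial x^0}+\frac{\partial}{\partial x^a}$, a one-line Minkowski inner-product computation against $\mathscr{N}_F$ from (\ref{mongenormalization}) gives $\eta(\frac{\partial}{\partial u^a})=\langle\mathscr{N}_F,\frac{\partial}{\partial u^a}\rangle=\sqrt2\,F'_{u^a}$, i.e. $\eta=x^\star\langle\mathscr{N}_F,\cdot\rangle=\sqrt2\,dF$. Since $\|\nabla F\|^2=1$ by (\ref{lightlike}), $dF$ is nowhere zero, so $S(TM)=\ker\eta=\ker dF$ is integrable (in accordance with $\mathscr{N}_F$ being UCC, hence closed, cf. Lemma \ref{cc}) and its leaves are exactly the connected components of the level sets $S=\{F=\text{const}\}=\{x^0=\text{const}\}\cap M$.

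Next I would compute the two expansions of a generic leaf $S$. Because $\mathscr{N}_F$ has conformal factor $1$ we have $A_{\mathscr{N}_F}=\stackrel\star A_{\xi_F}=-\tfrac1{\sqrt2}\mathrm{Hess}(F)$ by (\ref{axi}), so (\ref{expansion})--(\ref{s1}) give, on $S$,
\begin{equation*}
\theta_{\xi^+}=\mathrm{tr}(\stackrel\star A_{\xi_F})=-\tfrac1{\sqrt2}\,\Delta F,\qquad \theta^+_{N^+}=-\mathrm{tr}(A_{\mathscr{N}_F})=\tfrac1{\sqrt2}\,\Delta F,
\end{equation*}
with $\Delta$ the Euclidean Laplacian on $D\subset\R^{n+1}$; in particular $\theta^+_{N^+}=-\theta_{\xi^+}$, which is special to flat $\R^{n+2}_1$. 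From (\ref{meancurv}), $H=-\theta^+_{N^+}\xi^+-\theta_{\xi^+}N^+$, and since $\xi^+,N^+$ are null with $\langle\xi^+,N^+\rangle=-1$, one gets $\langle H,H\rangle=-2\,\theta^+_{N^+}\theta_{\xi^+}=(\Delta F)^2$.

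Finally I would match against Definition \ref{margtrap}. The leaf $S$ is a MOTS iff $H$ is null or zero, i.e. $\langle H,H\rangle=0$, i.e. $\Delta F\equiv0$ on $S$; and $S$ is an MTS iff $\theta_{\xi^+}=0$ and $\theta^+_{N^+}\le0$, but $\theta_{\xi^+}=0$ already forces $\Delta F\equiv0$ on $S$ and then $\theta^+_{N^+}=0\le0$ automatically, while any MTS obviously has $\theta_{\xi^+}=0$. So both the ``outer'' and the non-outer notion reduce to the single requirement that $\Delta F$ vanish on $S$, i.e. that $F$ be harmonic on $S$, which is the assertion. The only point needing care is interpretational rather than computational: ``$F$ harmonic on $S$'' must be read as ``$\Delta F$ vanishes at the points of $S$'', not as ``the intrinsic Laplacian of $F|_S$ vanishes'' (the latter being vacuous, $F$ being constant on $S$); everything else is bookkeeping with formulas already in hand, so there is no real obstacle.
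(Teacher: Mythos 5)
Your proposal is correct and follows essentially the same route as the paper, which obtains the proposition directly from the closedness of the generic UCC-rigging (giving integrability with leaves the level sets of $F$) together with the Hessian formula (\ref{axi}) and the trace identity (\ref{s1}) identifying the expansions with $\mp\tfrac1{\sqrt2}\Delta F$; your explicit check that $\eta=\sqrt2\,dF$ and the $\langle H,H\rangle=(\Delta F)^2$ computation just make explicit what the paper leaves implicit, and your reading of ``harmonic on $S$'' as the ambient Laplacian vanishing along $S$ is the intended one.
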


\begin{theorem}
	A Monge null hypersurface $M\to\R^{n+2}_1$ graph of a function $F$ is a MOTT if and only if $F$ is harmonic.
\end{theorem}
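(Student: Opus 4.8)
The plan is to reduce the statement, in both directions, to formula (\ref{s1}): for the generic UCC-rigging $\mathscr{N}_F$ the outgoing expansion of a screen leaf is $\theta_{\xi^+}=\stackrel\star S_1=-\frac1{\sqrt2}\Delta F$, so that ``$M$ is minimal for $\mathscr{N}_F$'' is \emph{literally} the same as ``$\Delta F\equiv 0$'', i.e. ``$F$ is harmonic''. The only point requiring care is that the definition of a trapping horizon only asks for the existence of \emph{some} foliation by MOTSs, not that the canonical level-set foliation attached to $\mathscr{N}_F$ be one of them; this gap will be bridged using that total geodesibility does not depend on the rigging (Lemma \ref{change}, item \ding{194}, $B^{\widetilde N}=\frac1\phi B^N$) together with the space-form dichotomy of Theorem \ref{geomini}.

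For the ``if'' direction I would argue straight from Proposition \ref{promots}. If $F$ is harmonic, then $F$ is harmonic on each of its level sets, so by Proposition \ref{promots} every leaf $S$ of the (integrable) screen distribution of $\mathscr{N}_F$ is a MOTS; since these leaves are exactly the level sets of $F$ and they foliate $M$, the hypersurface $M$ is foliated by MOTSs and is therefore a trapping horizon. (Equivalently, one may invoke the first part of Proposition \ref{prominimal}: $\mathscr{N}_F$ is closed and, by (\ref{s1}), $\Delta F\equiv 0$ makes $(M,g,\mathscr{N}_F)$ minimal.)

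For the ``only if'' direction, assume $M$ is a trapping horizon, so $M$ carries a foliation by MOTSs. The tangent distribution of this foliation is a screen distribution for $M$; choosing a compatible null transversal $N'$ locally via Theorem \ref{db}, the second line of (\ref{expansion}) gives $\stackrel\star S_1=\theta_{\xi^+}=0$ along every leaf, so $M$ is minimal for $N'$ (a statement insensitive to the remaining rescaling freedom in $N'$ by Lemma \ref{change}). Since $\R^{n+2}_1$ is flat, Theorem \ref{geomini} upgrades this to: $M$ is totally geodesic. Total geodesibility is rigging-independent (Lemma \ref{change}, item \ding{194}), hence $M$ is also totally geodesic for $\mathscr{N}_F$, and in particular minimal for $\mathscr{N}_F$. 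Then (\ref{s1}) yields $-\frac1{\sqrt2}\Delta F\equiv 0$, i.e. $F$ is harmonic, which closes the argument.

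I expect the only genuine obstacle to be exactly the foliation-independence issue above; once it is routed through Theorem \ref{geomini} and the transformation rule for the second fundamental form under a change of rigging, the rest is a direct reading of (\ref{expansion})--(\ref{s1}) and of Propositions \ref{prominimal} and \ref{promots}.
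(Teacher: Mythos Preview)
Your argument is correct and, for the ``if'' direction, identical to the paper's: harmonicity of $F$ plus Proposition \ref{promots} gives a foliation by MOTSs via the generic UCC-rigging.

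For the ``only if'' direction the paper takes a shorter path than you do. After observing that the MOTS foliation defines a screen distribution and that Proposition \ref{prominimal} makes $M$ minimal, the paper applies (\ref{s1}) \emph{directly}, without passing through Theorem \ref{geomini}. The implicit justification is that minimality itself is rigging-independent: by item \ding{198} of Lemma \ref{change} one has $\stackrel{\star}{A}_{\widetilde\xi}=\frac{1}{\phi}\sn-\frac{1}{\phi^{2}}B^{N}(\zeta,\cdot)\xi$, and since the $\xi$-component is killed when tracing against the (degenerate) metric $g$, one gets $\stackrel{\star}{S}_{1}^{\widetilde N}=\frac{1}{\phi}\stackrel{\star}{S}_{1}^{N}$; so ``$\stackrel{\star}{S}_{1}=0$'' transfers immediately from the MOTS rigging to $\mathscr{N}_{F}$. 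Your detour through Theorem \ref{geomini} (minimal $\Rightarrow$ totally geodesic in a space form $\Rightarrow$ rigging-independent by item \ding{194}) is a perfectly valid alternative bridge, and has the virtue of making the rigging-change step completely explicit, at the cost of invoking a stronger result than necessary. Either way the landing point is (\ref{s1}) and $\Delta F\equiv 0$.
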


\begin{proof}
	If $M$ is a MOTT then, there exists a foliation of $M$ by MOTSs of $\R^{n+2}_1$. The distribution of this foliation can be set as well as a screen distribution on $M$. By Proposition \ref{prominimal} and Definition \ref{minimal}, $M$ is minimal and it follows from equality (\ref{s1}) that $F$ is harmonic. Conversely, if $F$ is harmonic then endowed $M$ with the generic UCC-rigging, it follows from Proposition \ref{promots} that the screen distribution is integrable and leaves are MOTSs. Meaning that $M$ is a MOTT.
\end{proof}

From now on, $(\Bm(c),\bg)$ is an $(n+2)-$dimensional Lorentzian manifold with constant sectional curvature $c\in\R$, and $x:(M,g,N)\to(\Bm(c),\bg)$ is a SIC-rigged null hypersurface. From here, Gauss-Codazzi equations (\ref{geqs4})-(\ref{geqs8}) become

\begin{align}
\langle R(X,Y)Z,T\rangle&=c\left(\langle Y,Z\rangle\langle X,T\rangle-\langle X,Z\rangle\langle Y,T\rangle\right)\nonumber\\&+\varphi\left(B(Y,Z)B(X,T)-B(X,Z)B(Y,T)\right),\label{geqs13}\\
\langle \stackrel{\star}{R}(X,Y)Z,T\rangle&=c\left(\langle Y,Z\rangle\langle X,T\rangle-\langle X,Z\rangle\langle Y,T\rangle\right)\nonumber\\&+2\varphi\left(B(Y,Z)B(X,T)-B(X,Z)B(Y,T)\right),\label{geqs14}
\end{align}\vspace{-.9cm}
\begin{equation}
\left(\nabla_XB\right)(Y,Z)+B(Y,Z)\tau(X)=\left(\nabla_YB\right)(X,Z)+B(X,Z)\tau(Y),\label{geqs15}
\end{equation}
for all $X,Y,Z,T\in\Gamma(S(TM))$. From equations (\ref{geqs13}) and (\ref{geqs14}) above, the following result is straightforward. 

\begin{proposition}
	Let $x:(M,g,N)\to(\Bm,\bg)$ be a SIC-rigged null hypersurface of a locally flat Lorentzian manifold (thus $c=0$). Then, $(M,\nabla)$ is locally flat if and only if $(S, \snab)$ is locally flat, for any leaf $S$ of the screen distribution.
\end{proposition}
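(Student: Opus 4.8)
The plan is to extract both implications directly from the two contracted Gauss equations (\ref{geqs13}) and (\ref{geqs14}), once the vanishing of the full curvature tensor $R$ of $(M,\nabla)$ has been reduced to the vanishing of its purely screen components.

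First I would record that, since $c=0$, equations (\ref{geqs13}) and (\ref{geqs14}) read
\[
\langle R(X,Y)Z,T\rangle=\varphi\big(B(Y,Z)B(X,T)-B(X,Z)B(Y,T)\big),\quad
\langle\stackrel{\star}{R}(X,Y)Z,T\rangle=2\varphi\big(B(Y,Z)B(X,T)-B(X,Z)B(Y,T)\big)
\]
for all $X,Y,Z,T\in\Gamma(S(TM))$, hence $\langle\stackrel{\star}{R}(X,Y)Z,T\rangle=2\langle R(X,Y)Z,T\rangle$ on the screen. Because $\stackrel{\star}{R}$ is $S(TM)$-valued and $g$ is non-degenerate on $S(TM)$, vanishing of the left-hand side for all $T$ is equivalent to $\stackrel{\star}{R}=0$; thus $\stackrel{\star}{R}\equiv 0$ \emph{iff} the screen components of $R$ vanish. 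This already yields the implication ``$(M,\nabla)$ locally flat $\Rightarrow$ $(S,\snab)$ locally flat for every leaf $S$'', since $R\equiv 0$ forces in particular its screen components to vanish on every leaf.

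For the converse I would show that, in a flat ambient space and under the SIC hypothesis, $R$ vanishes as soon as its screen components do, i.e.\ $R$ has no other components. Writing $R(U,V)W\in TM=S(TM)\oplus TM^{\perp}$, the $\xi$-coefficient of a vector $Z\in TM$ equals $\langle Z,N\rangle$ and its screen part is detected by $\langle Z,X\rangle$, $X\in\Gamma(S(TM))$; so it suffices to check $\langle R(U,V)W,N\rangle=0$ and $\langle R(U,V)W,X\rangle=0$ for all $U,V,W\in\Gamma(TM)$, $X\in\Gamma(S(TM))$. The Gauss--Codazzi identity $\langle\overline{R}(U,V)W,N\rangle=\langle R(U,V)W,N\rangle$ together with $\overline{R}=0$ disposes of the first family. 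For the second, (\ref{ge1}) with $\overline{R}=0$ gives $\langle R(U,V)W,X\rangle=B(V,W)C(U,X)-B(U,W)C(V,X)$; since $B(\cdot,\xi)=0$ by (\ref{deg}) and $C(\xi,\cdot)=g(A_N\xi,\cdot)=\varphi\,g(\sn\xi,\cdot)=0$ (conformality together with (\ref{deg})), this expression vanishes whenever one of $U,V,W$ is $\xi$; and when $U,V,W$ all lie in $S(TM)$, (\ref{autoadjoint}) with $A_N=\varphi\sn$ gives $C(X,Y)=\varphi B(X,Y)$, so the expression reduces to exactly (\ref{geqs13}), namely $\tfrac12\langle\stackrel{\star}{R}(X,Y)Z,T\rangle$. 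Hence if $(S,\snab)$ is locally flat for every leaf, then $\stackrel{\star}{R}=0$, all components of $R$ vanish, and $(M,\nabla)$ is locally flat.

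The comparison of (\ref{geqs13}) and (\ref{geqs14}) is immediate; the only step requiring a line of care, and the one I expect to be the main point, is the reduction carried out in the converse, namely verifying that the ``mixed'' components of $R$ (those paired with $N$, or those with a $\xi$-entry) vanish automatically in this setting. That is precisely what the flatness of $\overline{R}$, the degeneracy relations (\ref{deg}), and the conformality of the screen distribution deliver.
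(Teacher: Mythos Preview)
Your argument is correct and follows the same core idea as the paper: with $c=0$, equations (\ref{geqs13}) and (\ref{geqs14}) give $\langle\stackrel{\star}{R}(X,Y)Z,T\rangle=2\langle R(X,Y)Z,T\rangle$ on the screen, which is exactly what the paper invokes when it declares the proposition ``straightforward'' from those two identities. Your converse is more careful than the paper's terse remark: you explicitly verify that the remaining components of $R$ (those detected by $N$, or those with a $\xi$-slot) vanish automatically from $\overline{R}=0$, (\ref{deg}), and the conformality $A_N=\varphi\sn$, which is a genuine point the paper leaves implicit.
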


\begin{theorem}\label{lemma}
	Let $x:(M,g,N)\to(\Bm(c),\bg)$ be a UCC-rigged null hypersurface, and $S$ a generic leaf of the screen distribution. Then,
	\begin{dingautolist}{192}
		\item $S$ is parallel if and only if $B$ is parallel;
		\item $S$ has parallel mean curvature if and only if the two expansions $\theta_N$ and $\theta_\xi$ are closed;
		\item if $M$ is totally umbilical with $B=\rho g$, then $\rho$ is constant on each (connected) leaf of the screen distribution. In addition if $\tau(\xi)=\rho$ then $\rho$ is constant on $M$.
		%\item if $M$ is proper totally umbilical, then $M$ can't be locally flat;
		%\item if $M$ is totally geodesic, then $M$ is locally flat.
	\end{dingautolist}
\end{theorem}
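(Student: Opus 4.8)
The plan is to obtain all three items as specializations of identities already established above, using the two features peculiar to a UCC-rigging: it is closed with conformal screen, so by item~\ding{192} of Lemma~\ref{cc} the rotation $1$-form $\tau^N$ vanishes on $S(TM)$; and its conformal factor is $1$, so by (\ref{autoadjoint}) the second fundamental forms of $M$ and of the screen coincide on screen vectors, $C(X,Y)=g(A_NX,Y)=g(\sn X,Y)=B(X,Y)$ for all $X,Y\in\Gamma(S(TM))$.

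For item~\ding{192} I would start from the formula (\ref{nablah}), $\bnab_Xh=(\nabla_XB+\tau(X)B)N+(\nabla_XC-\tau(X)C)\xi$. Restricting $X$ to $\Gamma(S(TM))$ kills the $\tau$-terms, leaving $\bnab_Xh=(\nabla_XB)(\cdot,\cdot)N+(\nabla_XC)(\cdot,\cdot)\xi$ on $S$. The one nontrivial point is to check that $(\nabla_XC)(Y,Z)=(\nabla_XB)(Y,Z)$ for $X,Y,Z\in\Gamma(S(TM))$: since $\sn$ and $A_N$ are $S(TM)$-valued and annihilate $\xi$ by (\ref{deg}), neither $B$ nor $C$ sees the $\xi$-component of $\nabla_XY$ or $\nabla_XZ$, and $B,C$ already agree on screen arguments, so the two covariant derivatives agree there. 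Hence $\bnab_Xh=(\nabla_XB)(\cdot,\cdot)(N+\xi)$ on $S$, and $\langle N+\xi,N+\xi\rangle=2$ shows $N+\xi$ is nowhere zero; therefore $\bnab h=0$ on $S$ iff $\nabla_XB=0$ for all $X\in\Gamma(S(TM))$, i.e. $S$ is parallel iff $B$ is parallel (with ``$B$ parallel'' read as the vanishing of $\nabla B$ on triples of screen vectors, the remaining components being forced by $B(\cdot,\xi)=0$).

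For item~\ding{193}: a UCC-rigging is closed, so item~\ding{193} of Proposition~\ref{prominimal} applies and says that $S$ has parallel mean curvature vector iff $d\theta_N=\theta_N\tau$ and $d\theta_\xi=-\theta_\xi\tau$ on $S$ (here $\theta_N=\theta^+_{N^+}$, $\theta_\xi=\theta_{\xi^+}$); but $\snab^\perp$ probes only directions tangent to $S$, along which $\tau$ vanishes, so these collapse to $d\theta_N=0$ and $d\theta_\xi=0$ on $S$, i.e. the two expansions are closed. For item~\ding{194}: $M$ is totally umbilical, $\sn=\rho P$, in the space form $\Bm(c)$, so Lemma~\ref{dbumbilical} gives $\xi(\rho)+\rho\tau^N(\xi)-\rho^2=0$ and $PU(\rho)+\rho\tau^N(PU)=0$ for every $U\in\Gamma(TM)$. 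Since $\tau^N$ vanishes on $S(TM)$, the second equation reduces to $PU(\rho)=0$, so $\rho$ is constant in every screen direction, hence on each connected leaf $S$. If moreover $\tau^N(\xi)=\rho$, the first equation becomes $\xi(\rho)=0$, and together with $PU(\rho)=0$ and $TM=S(TM)\oplus TM^\perp=S(TM)\oplus\mathrm{span}(\xi)$ this forces $U(\rho)=0$ for all $U\in\Gamma(TM)$, so $\rho$ is constant on the connected manifold $M$.

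The only step that is more than bookkeeping is the identity $(\nabla_XC)(Y,Z)=(\nabla_XB)(Y,Z)$ on screen triples used in item~\ding{192} (and pinning down the exact meaning of ``$B$ is parallel'' consistently with it); once this is in place, each of the three items is an immediate consequence of a formula already proved, and no analytic argument is needed.
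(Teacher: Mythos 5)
Your proof is correct, and for items \ding{192} and \ding{193} it is essentially the paper's argument made explicit: the paper simply observes that $\tau$ vanishes on the screen (Lemma \ref{cc}), that $\varphi=1$ gives $B=C$, and then cites (\ref{nablah}) and (\ref{normalderivaiveH}); your verification that $(\nabla_XC)(Y,Z)=(\nabla_XB)(Y,Z)$ on screen triples (which needs $A_N\xi=\varphi\sn\xi=0$, i.e.\ the conformal condition rather than (\ref{deg}) alone) is exactly the bookkeeping the paper leaves unstated. Where you genuinely diverge is item \ding{194}: the paper argues from the tangential Codazzi equation (\ref{geqs15}) with $\tau|_{S(TM)}=0$, total umbilicity $B=\rho g$, and the fact that $\nabla$ is $g$-metric on screen sections (from (\ref{met})), obtaining $(X\cdot\rho)\,Y=(Y\cdot\rho)\,X$ and hence $X\cdot\rho=0$ for all screen $X$ -- an argument that implicitly uses $\dim S(TM)\geq 2$ to pick $Y$ independent of $X$; you instead invoke Lemma \ref{dbumbilical}, whose second equation $PU(\rho)+\rho\tau^N(PU)=0$ collapses to $PU(\rho)=0$ once $\tau$ vanishes on the screen. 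Your route is shorter, does not need the dimension remark, and has the added benefit that the first equation (\ref{dbumbilical1}) of the same lemma immediately yields the addendum ``$\tau(\xi)=\rho\Rightarrow\xi(\rho)=0$, hence $\rho$ constant on $M$'', a part of the statement the paper's written proof passes over in silence; the paper's route, on the other hand, stays entirely within the Gauss--Codazzi machinery it has just set up and does not appeal to the external Lemma \ref{dbumbilical}. Both arguments rest on the same key input, the vanishing of $\tau$ on the screen distribution from Lemma \ref{cc}.
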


\begin{proof}
	Notice that the rotation $1-$form $\tau$ vanishes on the screen distribution since the rigging is UCC (Lemma \ref{cc}). Notice also that $\varphi=1$ and $B=C$. Hence, the first (resp. second) item follows from the equality (\ref{nablah}) (resp. (\ref{normalderivaiveH})). From equation (\ref{geqs15}), one has
	\begin{equation*}
	(X\cdot \rho)g(Y,Z)+\rho(\nabla_Xg)(Y,Z)=(Y\cdot \rho)g(X,Z)+\rho(\nabla_Yg)(X,Z).
	\end{equation*}
	Since $\nabla$ is $g-$metric on sections of the screen distribution (Proof: equation (\ref{met})), the latter equation leads to $(X\cdot\rho)Y=(Y\cdot\rho)X$, for all $X,Y\in\Gamma(S(TM))$. This show that $(X\cdot\rho)=0$, for all $X\in\Gamma(S(TM))$. This completes the proof of item \ding{194}.
\end{proof}

%\begin{proposition}
%Let $x:(M,g,N)\to(\Bm,\bg)$ be a UCC-rigged null hypersurface, and $S$ a generic leaf of the screen distribution. Then,
%\begin{dingautolist}{192}
%\item $S$ is parallel if and only if $B$ is parallel ;
%\item $S$ has parallel mean curvature if and only if the two expansions $\theta_N$ and $\theta_\xi$ are closed.
%\end{dingautolist}
%\end{proposition}

\begin{theorem}
	Let $x:(M,g,N)\to(\Bm(c),\bg)$ be a SIC-rigged null hypersurface of a Lorentzian manifold with constant sectional curvature $c$. If $M$ is totally umbilical (or geodesic) then, each (connected) leaf of the screen distribution is a space form.
	% and has parallel mean curvature vector.
\end{theorem}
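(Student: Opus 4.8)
The plan is to extract the intrinsic curvature of a leaf $S$ of the screen distribution straight out of the Gauss equation of the immersion $S\to\Bm(c)$, and then to promote ``pointwise constant curvature'' to genuine constant curvature along $S$ using the two Codazzi equations.

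First, translate the umbilicity hypothesis into tensor form: $\sn=\rho P$ means $B=\rho g$ as a $(0,2)$-tensor on $M$, and since $N$ is conformal, $A_N=\varphi\sn$ forces, via (\ref{autoadjoint}), $C=\varphi\rho g$ as well (all of $B$, $C$, $g$ annihilate the $\xi$-slot, so these equalities hold on all of $TM\times TM$). Substituting $B=\rho g$ into the Gauss equation (\ref{geqs14}) of $S\to\Bm(c)$ gives, for all $X,Y,Z,T\in\Gamma(S(TM))$,
\begin{equation*}
\langle\stackrel{\star}{R}(X,Y)Z,T\rangle=(c+2\varphi\rho^{2})\big(\langle Y,Z\rangle\langle X,T\rangle-\langle X,Z\rangle\langle Y,T\rangle\big),
\end{equation*}
so $S$ has, at each point, sectional curvature $\kappa:=c+2\varphi\rho^{2}$. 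The totally geodesic case ($\rho\equiv0$) is already finished here, with $\kappa=c$; and if $\dim S=1$ the leaf is a flat curve. So assume $\dim S\geq2$; it remains to show $\kappa$ is constant along each connected leaf.

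For $\dim S\geq3$ this is Schur's Lemma, but I would give a single argument covering also the surface case $\dim S=2$. By (\ref{met}), $(\nabla_Xg)(Y,Z)=0$ whenever $Y,Z\in\Gamma(S(TM))$, so $(\nabla_XB)(Y,Z)=(X\!\cdot\!\rho)\,g(Y,Z)$ and $(\nabla_XC)(Y,Z)=X(\varphi\rho)\,g(Y,Z)$ for $X,Y,Z\in\Gamma(S(TM))$. Inserting the first into the Codazzi equation (\ref{geqs15}), and the second into (\ref{geqs5}) --- whose left side $\langle\overline{R}(X,Y)Z,N\rangle$ vanishes in $\Bm(c)$ because $\eta$ annihilates the screen --- each yields a relation of the shape $\omega(X)g(Y,Z)=\omega(Y)g(X,Z)$ for all screen vectors $X,Y,Z$, with $\omega=d\rho+\rho\tau$ in the first case and $\omega=d(\varphi\rho)-\varphi\rho\tau$ in the second. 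Since a leaf is spacelike, $g$ is positive definite on it, and when $\dim S\geq2$ such a relation forces $\omega=0$; thus, on $\Gamma(S(TM))$, one gets $d\rho+\rho\tau=0$ (which is exactly Lemma \ref{dbumbilical}) and $d(\varphi\rho)-\varphi\rho\tau=0$.

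Finally, combine the two: writing $d(\varphi\rho)=\rho\,d\varphi+\varphi\,d\rho$ and using $d\rho=-\rho\tau$ on the screen, the second relation becomes $\rho\,d\varphi=2\varphi\rho\,\tau$ there, and therefore, for every $X\in\Gamma(S(TM))$,
\begin{equation*}
d(\varphi\rho^{2})(X)=\rho^{2}\,d\varphi(X)+2\varphi\rho\,d\rho(X)=\rho\big(\rho\,d\varphi(X)\big)-2\varphi\rho^{2}\tau(X)=2\varphi\rho^{2}\tau(X)-2\varphi\rho^{2}\tau(X)=0,
\end{equation*}
an identity valid at every point of $S$, zeros of $\rho$ included. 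Since the leaves of the screen distribution are precisely its integral manifolds, $\varphi\rho^{2}$, hence $\kappa=c+2\varphi\rho^{2}$, is locally constant and so constant on each connected leaf: $(S,\snab)$ has constant sectional curvature $c+2\varphi\rho^{2}$, i.e. is a space form. I expect the only genuine subtlety to be the surface case $\dim S=2$, where Schur is unavailable, so that one is forced to run the coupled Codazzi computation for $\rho$ and $\varphi\rho$ above rather than quote an off-the-shelf rigidity statement.
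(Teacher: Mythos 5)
Your proof is correct, and its first half is exactly the paper's argument: both substitute $B=\rho g$ (and $C=\varphi\rho g$, coming from $A_N=\varphi\sn$) into the Gauss equation (\ref{geqs14}) and read off that a leaf has pointwise sectional curvature $\kappa=c+2\varphi\rho^{2}$. Where you genuinely diverge is in the passage from pointwise constancy to constancy along the leaf: the paper disposes of it with ``it is then known that $\kappa$ is a constant function'', i.e.\ an implicit appeal to Schur's lemma, which is only available for $\dim S\geq3$ (ambient dimension at least $5$), whereas you extract $d\rho+\rho\tau=0$ (exactly the second identity of Lemma \ref{dbumbilical}) from the Codazzi equation (\ref{geqs15}) and $d(\varphi\rho)-\varphi\rho\tau=0$ from (\ref{geqs5}) with vanishing left-hand side in $\Bm(c)$, and combine them to get $d(\varphi\rho^{2})=0$ on the screen; your computation is sound, including at zeros of $\rho$, since $\rho\,d\varphi=2\varphi\rho\,\tau$ is derived as an identity of one-forms. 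This buys real generality: it covers $\dim S=2$, i.e.\ the physically standard case of a $4$-dimensional spacetime, precisely where Schur's lemma fails and the paper's one-line appeal is a gap; it also yields directly the constancy of $\varphi\rho^{2}$ on each leaf, which the paper only states afterwards as a consequence of the theorem. Your tools are entirely within the paper's own framework -- indeed (\ref{geqs15}) is used in just this way in Theorem \ref{lemma} and the corollary that follows -- so your version can be read as a repaired and slightly stronger form of the paper's proof.
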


\begin{proof}
	Assume that $M$ is totally umbilical with $B=\rho g$. Then equation (\ref{geqs14}) gives 
	\begin{equation*}
	\langle\stackrel{\star}{R}(X,Y)Z,T\rangle=(c+2\varphi\rho^2)\left(\langle Y,Z\rangle\langle X,T\rangle-\langle X,Z\rangle\langle Y,T\rangle\right),
	\end{equation*}
	Which show that sectional curvature of $S$ is constant $\kappa=c+2\varphi\rho^2$ for all sections $\sigma=span(X,Y)$. It is then known that $\kappa$ is a constant function.
	This prove that each connected leaf of the screen distribution has constant sectional curvature $$\kappa=c+2\varphi\rho^2.$$ 
\end{proof}

Hence for a SIC-rigged totally umbilical null hypersurface $(M,g,N)\to(\Bm(c),\bg)$ with $B=\rho g$ and $A_N=\varphi\sn$, the product $\varphi\rho^2$ is constant on each leaf of the screen distribution. It is noteworthy that a closed rigging with conformal screen distribution is a SIC-rigging.

\begin{corollary}
	Let $x:(M,g,N)\to(\Bm(c),\bg)$ be a  null hypersurface endowed with a closed rigging with conformal screen  distribution with $A_N=\varphi\sn$. If $M$ is totally umbilical with $B=\rho g$ then, $\rho$ and $\varphi$ are constants on each (connected) leaf of the screen distribution. 
\end{corollary}

\begin{proof}
	By Lemma \ref{cc}, the rotation $1-$form vanishes on the screen distribution and equation (\ref{geqs15}) become
	\begin{equation*}
	\left(\nabla_XB\right)(Y,Z)=\left(\nabla_YB\right)(X,Z).
	\end{equation*}
	Now, if $M$ is totally umbilical then by using the above equation, one shows that $\rho$ is constant on each leaf of the screen distribution. It follows that $\varphi$ also is constant on each leaf of the screen distribution, since it is the case for $\varphi\rho^2$.
\end{proof}

The following is a direct consequence of the above Theorem.
\begin{corollary}
	Let $x:(M,g,N)\to\R^{n+2}_1$ be a UCC-rigged null hypersurface. If $M$ is totally umbilical with $B=\rho g$ then, each (connected) leaf of the screen distribution is a Riemannian manifold with positive constant sectional curvature $c=2\rho^2$ and parallel mean curvature vector.
\end{corollary}

%\begin{theorem}
%Let $x:(M,g,N)\to\R^{n+2}_1$ be a UCC-rigged null hypersurface. If $M$ is proper totally umbilical then, any leaf of the screen distribution can't be marginally trapped.
%\end{theorem}

\begin{example}
	Let $x:\Lambda_0^{n+1}\to\R^{n+2}_1$, $p=(x^1,\ldots x^{n+1})\mapsto x=(x^0=F(x^1,\ldots x^{n+1}),x^1,\ldots x^{n+1})$ be the future null cone, which is the graph of the function
	$$F=\left(\sum_{a=1}^{n+1}(x^a)^2\right)^{1/2}.$$
	This is  a totally umbilical null hypersurface in  $\R^{n+2}_1$ and  the generic
	$UCC$-rigging (\ref{mongenormalization}) becomes
	$$\mathscr{N}_{F}=-\frac1{\sqrt2}\frac{\partial}{\partial x^0}+\frac{1}{x^0\sqrt2}\sum_{a=1}^{n+1}(x^a)\frac{\partial}{\partial
		x^a},$$ and the corresponding rigged vector field
	$$\xi_{F}=-\frac1{\sqrt2}\frac{\partial}{\partial x^0}-\frac{1}{x^0\sqrt2}\sum_{a=1}^{n+1}(x^a)\frac{\partial}{\partial x^a}.$$ For this rigging, the screen distribution is integrable and leaves of the screen distribution are sections of the future lightcone by hyperplanes $x^0=cste$. These are spheres of radius $x^0$ centered at $(x^0,0,\ldots,0)\in\R^{n+2}$. (See \cite{AFN} for a proof.)  All the principal curvatures are given by
	$$\rho=\frac{1}{x^0\sqrt2},$$
	which are constants on each leaf of the screen distribution, and this agrees with Theorem \ref{lemma}.
	By the Theorem above, each leaf of the screen distribution has positive constant sectional curvature $c=2\rho^2=\frac{1}{(x^0)^2}$, which is really the sectional curvature of a sphere of radius $x^0$.
\end{example}

\bibliographystyle{amsalpha}
{\footnotesize\bibliography{NullHypersurfacesTrappingHorizons}}

\end{document}